\begin{document}

\newcommand{\om}{\omega}
\newcommand{\si}{\sigma}
\newcommand{\la}{\lambda}
\newcommand{\ph}{\varphi}
\newcommand{\ep}{\varepsilon}
\newcommand{\tep}{\widetilde{\varepsilon}}
\newcommand{\al}{\alpha}
\newcommand{\sub}{\subseteq}
\newcommand{\hra}{\hookrightarrow}
\newcommand{\de}{\delta}

\newcommand{\RR}{{\mathbb{R}}}
\newcommand{\NN}{{\mathbb{N}}}
\newcommand{\DD}{{\mathbb{D}}}
\newcommand{\CC}{{\mathbb{C}}}
\newcommand{\ZZ}{{\mathbb{Z}}}
\newcommand{\T}{{\mathbb{T}}}
\newcommand{\TT}{{\mathbb{T}}}
\newcommand{\KK}{{\mathbb{K}}}
\newcommand{\px}{\partial X}
\newcommand{\cL}{{\mathcal{L}}}
\newcommand{\cA}{{\mathcal{A}}}
\newcommand{\cB}{{\mathcal{B}}}

\def\C{\mathbb C}
\def\R{\mathbb R}
\def\b{\mathcal B}
\def\c{\mathcal C}
\def\X{\mathbb X}
\def\U{\mathcal U}\def\M{\mathcal M}
\def\a{\mathcal A}
\def\K{\mathbb K}
\def\F{\mathbb F}
\def\b{\mathfrak{B}}
\def\f{\mathfrak{F}}
\def\x{\mathfrak{X}}
\def\Z{\mathbb Z}
\def\P{\mathbb P}
\def\v{\vartheta_\alpha}
\def\va{{\varpi}_\alpha}
\def\I{\mathbb I}
\def\H{\mathbb H}
\def\Y{\mathbb Y}
\def\E{\mathbb E}
\def\N{\mathbb N}
\def\cal{\mathcal}

\newtheorem{theorem}{Theorem}[section]
\newtheorem{lemma}{Lemma}[section]
\newtheorem{proposition}{Proposition}[section]
\theoremstyle{definition}
\newtheorem{definition}{Definition}[section]
\newtheorem{remark}{Remark}[section]
\theoremstyle{Remark}
\newtheorem{example}{Example}[section]
\theoremstyle{example}
\numberwithin{equation}{section}
\def\N{\mbox{I\hspace{-.15em}N}}
\def\R{\mbox{I\hspace{-.15em}R}}
\def\P{\mbox{I\hspace{-.15em}P}}
\def\E{\mbox{I\hspace{-.15em}E}}

\title[
\hfil Existence for a almost automorphic solution]{\bf{ Existence of almost automorphic solution in distribution for a class of stochastic integro-differential  equation driven by L\'{e}vy noise}}


\author{Mamadou Moustapha Mbaye}
\address{D\'epartement de Math\'ematiques, \,Facult\'{e} des Sciences et Technique, \,Universit\'{e} Cheikh Anta Diop,
BP 5005, \,Dakar-Fann, Senegal}
\email{mamadoumoustapha3.mbaye@ucad.edu.sn}

\author{Solym Mawaki Manou-Abi}
\address{Institut Montpelli\'erain Alexander Grothendieck, UMR CNRS 5149,\, Universit\'e de Montpellier}
\email{solym-mawaki.manou-abi@umontpellier.fr}
\address{D\'epartement Sciences et Technologies, CUFR de Mayotte, 3 Route Nationale, 97660 Dembéni}
\email{solym.manou-abi@univ-mayotte.fr}



\subjclass[2000]{34C27; 34K14; 34K30; 34K50; 35B15; 35K55; 43A60; 60G20.}



\keywords{almost automorphic solution; stochastic processes; stochastic evolution equations; L\'{e}vy noise.}

\begin{abstract}
We  investigate  a class of  stochastic integro-differential  equations driven by L\'{e}vy noise. Under some appropriate assumptions, we establish the existence of an square-mean almost automorphic solutions in distribution. Particularly, based on Schauder's fixed point theorem, the existence of square-mean almost automorphic mild solution distribution is obtained by using the condition which is weaker than Lipschitz conditions. We provide an example to illustrate ours results.
\end{abstract}

\maketitle

\section{\textbf{Introduction}}
The aim of this work is to study the existence and uniqueness of the square-mean almost automorphic mild solutions in distibution to the following class of nonlinear stochastic integro-differential equations driven by L\'{e}vy noise in a separable Hilbert space $H$\\
\begin{eqnarray}\label{eq1.1}
     x'(t) &=& Ax(t) + g(t,x(t)) + \int_{-\infty}^{t}B_{1}(t-s)f(s,x(s))ds\\ \nonumber
     &+&  \int_{-\infty}^{t}B_{2}(t-s)h(s,x(s))dW(s)\\\nonumber
     &+&\int_{-\infty}^{t}B_{2}(t-s)\int_{|y|_{V}<1}F(s,x((s-),y)\tilde{N}(ds,dy)\\
    &+& \int_{-\infty}^{t}B_{2}(t-s)\int_{|y|_{V}\geq 1}G(s,x(s-,y)N(ds,dy) \quad  \mbox{for all}\quad t \in \mathbb{R},\nonumber
\end{eqnarray}
where $A :D(A)\subset H$ is the infinitesimal generator of a $C_{0}$-semigroup $(T(t))_{t\geq0}$, $B_{1}$ and $B_{2}$ are convolution-type kernels in $L^{1}(0,\infty)$ and $L^{2}(0,\infty)$ respectively. $g,f: \mathbb{R}\times L^{2}(P,H)\rightarrow L^{2}(P,H)$
$h: \mathbb{R}\times L^{2}(P,H)\rightarrow L(V,L^{2}(P,H))$
$F,G: \mathbb{R}\times L^{2}(P,H)\times V \rightarrow L^{2}(P,H)$; $W$ and $N$ are the L\'{e}vy-It\^{o}  decomposition components of the two-sided L\'{e}vy process $L$ (with assumptions stated in Section \ref{levy}.).

Throughout this work, we assume $(H, \|\cdot\|)$ and $(V, |\cdot|)$ are real separable Hilbert spaces. We denote by $L(V, H)$ the family of bounded linear operators from $V$ to $H$ and $L^{2}(P,H)$ is the space of all $H$-valued random variables $x$ such that\\
\begin{align*}
 \mathbb{E}\|x\|^{2}=\int_{\Omega}\|x\|^{2}dP < + \infty.
\end{align*}

The concept of almost automorphic is a natural generalization of the almost periodicity that was introduced by Bochner \cite{B}.  The basic aspects of the theory of almost automorphic functions can be found for instance in to the book \cite{N'G}.

In recent years, the study of almost periodic or almost automorphic solutions to some stochastic differential equations have been considerably investigated in lots of publications \cite{BP1,BP2,BP5,BP4,CYK1,Tog1,Tog2,Pub0,Pub3,FU1,TC} because of its significance and applications in physics, mechanics and mathematical biology. The concept of square-mean almost automorphic stochastic processes was introduced by Fu and Liu \cite{FU}. As indicated in \cite{Mel1,Mel2}, it appears that almost periodicity or automorphy in distribution sense is a more appropriate concept relatively to solutions of stochastic differential equations. Recently, the concept of Poisson square-mean almost automorphy was introduced by Liu and Sun \cite{Lvy} to deal with some stochastic evolution equations driven by L\'{e}vy noise. For the almost automorphy in distribution, its various extensions in distribution sense and the applications in stochastic differential equations, one can see \cite{Pub4,Mbaye2,Wang} for more details.

One should point out that other slightly different versions of equation (\ref{eq1.1}) have been considered in the literature. In particular, Bezandry \cite{BP2}, Xia \cite{Xia} and Mbaye \cite{Mbaye} investigated the existence and uniqueness of the solution of equation (\ref{eq1.1}) in the case when $F=G=0$ based on anotherl fixed point theorem.

The rest of this work is organized as follows. In Section 2, we make a recalling on L\'{e}vy process. In Section 3, we review some concepts and basic properties on almost automorphic and Poisson almost automorphic processes. In Section 4, by Schauder's fixed point theorem, we prove the existence of an square-mean almost automorphic mild solution in distribution of equation (\ref{eq1.1}). In Section 5, we provide an example to illustrate our results.
\section{\textbf{L\'{e}vy process}}\label{levy}
\begin{definition}
 A $V$-valued stochastic process $L = (L(t), t \geq 0)$ is called L\'{e}vy process if:
\begin{enumerate}
\item $L(0) = 0$ almost surely;
\item $L$ has independent and stationary increments;
\item $L$ is stochastically continuous, i.e. for all $\epsilon > 0$ and for all $s > 0$
\begin{equation*}
    \lim_{t\rightarrow s}P(|L(t) - L(s)|>\epsilon)=0.
\end{equation*}
\end{enumerate}
\end{definition}
Every  L\'{e}vy process is c\`{a}dl\`{a}g.\\
Let $L$ be a L\'{e}vy process.
\begin{definition}\cite{Lvy}~~
\begin{enumerate}
\item A borel $B$ in $V-\{0\}$ is bounded below if $0 \notin \overline{B}$, where $\overline{B}$ is the closure of $B$.
\item $\nu(\cdot)=\mathbb{E}(N(1,\cdot))$ is called the intensity measure associated with $L$, where $\bigtriangleup L(t)=L(t)-L(t-)$ for each $t\geq 0$ and
    \begin{equation*}
        N(t,B)(\omega):=\sharp\bigg\{0 \leq s \leq t : \bigtriangleup L(s)(\omega) \in B\bigg\} := \sum _{0 \leq s \leq t}\chi_{B}(\bigtriangleup L(s)(\omega))
    \end{equation*}
    with $L(t-)=\lim_{t\nearrow s}L(s)$ and $\chi_{B}$ being the indicator function for any Borel set $B$ in $V-\{0\}$.
\item $N(t,B)$ is called Poisson random measure if $B$ is bounded below, for each $t \geq 0$.
\item For  each $t \geq 0$ and $B$ bounded below, we define the compensated Poisson random measure by
\begin{equation*}
    \widetilde{N}(t,B)= N(t,B) - t\nu(B).
\end{equation*}
\end{enumerate}

\end{definition}

\begin{proposition}\cite{levy1,levy2}
Let $L$ be the $V$-valued L\'{e}vy process. Then there exist $a\in V$, $V$-valued Wiener process $W$ with covariance operator $Q$, and an independent Poisson random measure on $\mathbb{R}^{+}\times (V-\{0\})$ such that for each $t \geq 0$
\begin{equation*}
    L(t)= at + W(t) + \int_{|x|<1}x\widetilde{N}(t,dx) + \int_{|x|\geq1}x N(t,dx),
\end{equation*}
where the Poisson random measure $N$ has the intensity measure $\nu$ satisfying

\begin{equation}\label{intens}
    \int_{V}(|x|^{2}\wedge 1)\nu(dx)<\infty
\end{equation}
and $\widetilde{N}$ is the compensated Poisson random measure of $N$.
\end{proposition}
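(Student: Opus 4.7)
The plan is to carry out the classical L\'{e}vy-It\^{o} construction: isolate the jumps of $L$, compensate the small ones, and show that what remains is a Wiener process with a deterministic drift. First I would verify that $N(t,B)$ is a Poisson random measure on $\mathbb{R}^{+}\times(V\setminus\{0\})$ with intensity $dt\otimes\nu(dx)$; this is where the stationary independent increments of $L$ enter, transferred to the point process of jumps via the c\`adl\`ag structure. The integrability condition \eqref{intens} is then forced by two separate facts: jumps of size at least $1$ must occur only finitely often on bounded time intervals (giving $\nu(\{|x|\geq 1\})<\infty$), while the sum of squares of the small jumps on a finite interval must have finite expectation in order for the compensated sum to define an $L^{2}$-martingale (giving $\int_{|x|<1}|x|^{2}\,\nu(dx)<\infty$).

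Next I would treat the large jumps by setting
\begin{equation*}
J_{1}(t)=\int_{|x|\geq 1} x\,N(t,dx)=\sum_{0\leq s\leq t}\Delta L(s)\,\mathbf{1}_{\{|\Delta L(s)|\geq 1\}},
\end{equation*}
a well-defined compound Poisson process since only finitely many such jumps occur on any bounded time interval. For the small jumps, I would define $J_{\epsilon}(t)=\int_{\epsilon\leq|x|<1}x\,\widetilde{N}(t,dx)$ and show that this family is Cauchy in $L^{2}(\mathbb{P};V)$ as $\epsilon\downarrow 0$. By It\^{o}'s isometry for compensated Poisson integrals,
\begin{equation*}
\mathbb{E}\bigl|J_{\epsilon}(t)-J_{\epsilon'}(t)\bigr|^{2}=t\int_{\epsilon'\leq|x|<\epsilon}|x|^{2}\,\nu(dx),
\end{equation*}
which vanishes thanks to \eqref{intens}. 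The resulting $L^{2}$-limit $J_{2}(t)=\int_{|x|<1}x\,\widetilde{N}(t,dx)$ is then a centered square-integrable martingale.

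Finally I would set $L_{c}(t)=L(t)-J_{1}(t)-J_{2}(t)$ and argue that $L_{c}$ is a continuous $V$-valued L\'{e}vy process. Writing $L_{c}(t)=at+W(t)$, with $a\in V$ identified as the mean of $L_{c}(1)$ and $W$ as the centered remainder, I would invoke L\'{e}vy's characterization theorem for Hilbert-valued martingales to identify $W$ as a Wiener process with some covariance operator $Q$. Independence between the three pieces would follow from the orthogonality of continuous and purely discontinuous martingales, combined with the independent-scattering property of the Poisson random measure on the disjoint sets $\{|x|<1\}$ and $\{|x|\geq 1\}$. The main obstacle in my view is the $L^{2}$-convergence of the compensated small-jump integrals together with the verification that the continuous remainder $L_{c}$ is genuinely Gaussian; both steps rely crucially on the integrability condition \eqref{intens}, and the Gaussian identification typically requires a careful computation of the characteristic functional of $L_{c}$ using the independent increment property inherited from $L$.
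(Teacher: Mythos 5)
The paper does not prove this proposition at all: it is stated as a classical result (the L\'{e}vy--It\^{o} decomposition) with a citation to Applebaum and to Peszat--Zabczyk, so there is no in-paper argument to compare against. Your outline is precisely the standard construction given in those references --- isolate the finitely many large jumps as a compound Poisson process, build the small-jump part as an $L^{2}$-limit of compensated truncated integrals via the It\^{o} isometry, and identify the continuous remainder as drift plus a $Q$-Wiener process --- and as a sketch it is essentially correct. Two points deserve more care if this were written out in full. First, the finiteness of $\int_{|x|<1}|x|^{2}\,\nu(dx)$ has to be \emph{derived} from the process (typically by showing that the second moments of the $\epsilon$-truncated compensated jump sums are bounded uniformly in $\epsilon$, using that the process with large jumps removed has moments of all orders); your phrasing, that it is ``forced'' because the compensated sum ``must'' be an $L^{2}$-martingale, states the motivation rather than the proof. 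Second, independence of the Gaussian part from the jump parts does not follow from orthogonality of continuous and purely discontinuous martingales --- that only gives vanishing covariation, hence uncorrelatedness --- and the independent-scattering property of $N$ only separates the two jump components from each other; the genuine independence of $W$ from $N$ requires the characteristic-functional (exponential-martingale) computation that you correctly flag at the end as the delicate step.
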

Let $L_{1}(t)$ and $L_{2}(t)$, $t \geq 0$ be two independent and identically distributed L\'{e}vy processes. Let
\begin{equation*}
    L(t)=\left\{\begin{array}{ll}
    L_{1}(t)\quad &\mbox{for}\quad t\geq 0,\\
    -L_{2}(-t)\quad &\mbox{for}\quad t< 0
\end{array}
\right.
\end{equation*}
\begin{remark}\label{rem}
By (\ref{intens}), it follows that $\int_{|x|\geq 1}\nu(dx)<\infty$. For convenience, we denote
\begin{equation*}
    b:=\int_{|x|\geq 1}\nu(dx).
\end{equation*}
\end{remark}
Then $L$ is a two-sided L\'{e}vy process defined on the filtered probability space $(\Omega,\mathcal{F},P,(\mathcal{F}_{t})_{t\in \mathbb{R}})$. We assume that $Q$ is a positive, self-adjoint and trace class operator on $V$, see \cite{Prato} for more details. The stochastic process $\widetilde{L}=(\widetilde{L}(t), t\in \mathbb{R})$ given by $\widetilde{L}(t):= L(t+s) - L(s)$ for some $s \in \mathbb{R}$ is also a two-sided L\'{e}vy process which shares the same law as $L$. For more details about the L\'{e}vy process, we refer to \cite{levy1,Lvy,levy2}.

\section{\textbf{Square-mean almost automorphic process}}
In this section, we recall the concepts of square-mean almost automorphic process and there basic properties.

\begin{definition}\label{defcontinuous0}
Let $x : \mathbb{R} \rightarrow L^{2}(P,H)$ be a stochastic process.
\begin{enumerate}
\item $x$  is said to be stochastically bounded if there exists $M > 0$ such that
              $$ \mathbb{E}\|x(t)\|^{2} \leq M \quad \mbox{for all}\quad t \in \mathbb{R}.$$

\item $x$ is said to be stochastically continuous if
 $$ \lim_{t \rightarrow s}\mathbb{E}\|x(t) - x(s)\|^{2} = 0 \quad \mbox{for all}\quad s \in \mathbb{R}.$$
 \end{enumerate}
 \end{definition}
 Denote by $SBC(\mathbb{R},L^{2}(P,H))$ the space of all the stochastically bounded and continuous processes. Clearly, the space $SBC(\mathbb{R},L^{2}(P,H))$ is a Banach space equipped with the following norm
    $$\|x\|_{\infty}=\sup_{t\in \mathbb{R}}(\mathbb{E}\|x(t)\|^{2})^{\frac{1}{2}}.$$

 \begin{definition}\cite{Lvy}\label{defcontinuous}
Let $J : \mathbb{R}\times V \rightarrow L^{2}(P,H)$ be a stochastic process.
\begin{enumerate}
\item $J$  is said to be Poisson stochastically bounded if there exists $M > 0$ such that
              $$\int_{V}\mathbb{E} \|J(t,x)\|^{2} \nu(dx) \leq M \quad \mbox{for all}\quad t \in \mathbb{R}.$$

\item $J$ is said to be Poisson stochastically continuous if
 $$ \lim_{t \rightarrow s}\int_{V}\mathbb{E} \|J(t,x) - J(s,x)\|^{2} \nu(dx) = 0 \quad \mbox{for all}\quad s \in \mathbb{R}.$$
 Denote by $PSBC(\mathbb{R}\times V,L^{2}(P,H))$ the space of all the stochastically bounded and continuous processes.
 \end{enumerate}
 \end{definition}

\begin{definition} \cite{FU}
Let $x : \mathbb{R} \rightarrow L^{2}(P,H)$ be a continuous stochastic process. $x$ is said be square-mean almost automorphic process if for every sequence of real numbers $(t^{'}_n)_n$ we can extract a subsequence $(t_n)_n$ such that, for some stochastic process $y : \mathbb{R} \rightarrow L^{2}(P,H)$, we have
$$\lim_{n\rightarrow +\infty}\mathbb{E}\|x(t + t_{n}) - y(t)\|^{2}=0 \quad \mbox{for all}\quad t \in \mathbb{R}$$
and
$$\lim_{n\rightarrow +\infty}\mathbb{E}\|y(t - t_{n}) - x(t)\|^{2}=0 \quad \mbox{for all}\quad t \in \mathbb{R}.$$
We denote the space off all such stochastic processes by $SAA(\mathbb{R}, L^{2}(P,H))$.
\end{definition}
\begin{theorem} \cite{FU}
$ SAA(\mathbb{R}, L^{2}(P,H))$ equipped with the norm $\|\cdot\|_{\infty}$ is a Banach space.
\end{theorem}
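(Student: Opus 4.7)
The plan is to realise $SAA(\mathbb{R}, L^{2}(P,H))$ as a closed subspace of the Banach space $SBC(\mathbb{R}, L^{2}(P,H))$, completeness of the latter being already recorded in the text. Since the sum of two square-mean almost automorphic processes is again square-mean almost automorphic (via a two-step subsequence extraction) and scalar multiplication obviously preserves the definition, $SAA(\mathbb{R}, L^{2}(P,H))$ is a linear subspace. Two things then remain: (i) every $x\in SAA(\mathbb{R}, L^{2}(P,H))$ is stochastically bounded, so that the norm $\|\cdot\|_{\infty}$ makes sense and we are genuinely inside $SBC$; (ii) the subspace is closed in $\|\cdot\|_{\infty}$.

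For (i), I would argue by contradiction. Suppose $\sup_{t\in\mathbb{R}}\mathbb{E}\|x(t)\|^{2}=\infty$ and pick $(s_n)$ with $\mathbb{E}\|x(s_n)\|^{2}\to\infty$. Applying the definition of almost automorphy to $t'_n=s_n$ produces a subsequence $(t_n)$ and a process $y$ with $\mathbb{E}\|x(t_n)-y(0)\|^{2}\to 0$; the $L^{2}$ triangle inequality then forces $(\mathbb{E}\|x(t_n)\|^{2})^{1/2}\to(\mathbb{E}\|y(0)\|^{2})^{1/2}<\infty$, contradicting divergence. Continuity is built into the definition, so $SAA\subseteq SBC$.

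For (ii), let $(x_n)$ be Cauchy in $\|\cdot\|_{\infty}$ with limit $x\in SBC$, and fix an arbitrary sequence $(t'_n)\subset\mathbb{R}$. I would perform a diagonal extraction: apply almost automorphy of $x_1$ along $(t'_n)$ to produce a subsequence $(t^{(1)}_n)$ and limit process $y_1$; inductively apply it to $x_k$ along $(t^{(k-1)}_n)$ to obtain $(t^{(k)}_n)$ and $y_k$. Setting $t_n:=t^{(n)}_n$, the tail $(t_n)_{n\ge k}$ is a subsequence of $(t^{(k)}_n)$ and so realises the almost automorphic limits for every $x_k$ simultaneously, both in the forward and backward direction.

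The last step is to build the candidate limit $y$ for $x$. Applying the triangle inequality at points $t+t_n$ and letting $n\to\infty$ gives
\begin{equation*}
\bigl(\mathbb{E}\|y_k(t)-y_\ell(t)\|^{2}\bigr)^{1/2}\;\le\;\|x_k-x_\ell\|_{\infty},
\end{equation*}
so $(y_k)$ is Cauchy uniformly in $t$ and converges to some $y\in SBC(\mathbb{R},L^{2}(P,H))$. A three-term splitting
\begin{equation*}
\bigl(\mathbb{E}\|x(t+t_n)-y(t)\|^{2}\bigr)^{1/2}\le\|x-x_k\|_{\infty}+\bigl(\mathbb{E}\|x_k(t+t_n)-y_k(t)\|^{2}\bigr)^{1/2}+\|y_k-y\|_{\infty},
\end{equation*}
in which one first takes $k$ large to kill the outer terms and then $n$ large for the middle one, yields $\mathbb{E}\|x(t+t_n)-y(t)\|^{2}\to 0$; the symmetric $\varepsilon/3$ argument with $y(t-t_n)$ and $x(t)$ gives the reverse convergence. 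Hence $x\in SAA(\mathbb{R},L^{2}(P,H))$, proving closure. The principal technical point is the bookkeeping of the diagonal extraction together with the two $\varepsilon/3$ splittings; everything else is routine linearity and triangle-inequality work.
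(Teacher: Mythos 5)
The paper states this theorem without proof, simply citing \cite{FU}, so there is no in-text argument to compare against; your proposal is correct and is essentially the standard proof from that reference: realise $SAA(\mathbb{R},L^{2}(P,H))$ as a closed linear subspace of the Banach space $SBC(\mathbb{R},L^{2}(P,H))$, with the contradiction argument for boundedness, the diagonal extraction, and the two $\varepsilon/3$ splittings carrying the load. The only cosmetic remark is that the limit process $y$ need not be shown to lie in $SBC$ (the definition of square-mean almost automorphy only asks for a map into $L^{2}(P,H)$), but this does not affect the validity of your argument.
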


\begin{definition}\cite{Lvy}
Let $D : \mathbb{R}\times V \rightarrow L^{2}(P,H)$ be stochastic process. $D$ is said be Poisson square-mean almost automorphic process in $t\in \mathbb{R}$ if $D$ is Poisson continuous and for every sequence of real numbers $(t^{'}_n)_n$ we can extract a subsequence $(t_n)_n$ such that, for some stochastic process $\tilde{D} : \mathbb{R}\times V \rightarrow L^{2}(P,H)$ with $\int_{V}\mathbb{E} \|\tilde{D}(t,x)\|^{2} \nu(dx)<\infty$ such that
$$\lim_{n\rightarrow +\infty}\int_{V}\mathbb{E}\|D(t + t_{n},x) - \tilde{D}(t,x)\|^{2}\nu(dx)=0 \quad \mbox{for all}\quad t \in \mathbb{R}$$
and
$$\lim_{n\rightarrow +\infty}\int_{V}\mathbb{E}\|\tilde{D}(t - t_{n},x) - D(t,x)\|^{2}\nu(dx)=0 \quad \mbox{for all}\quad t \in \mathbb{R}.$$
We denote the space off all such stochastic processes by $PSAA(\mathbb{R}\times V ,L^{2}(P,H))$.
\end{definition}

\begin{definition}\cite{Lvy}
Let $F : \mathbb{R}\times L^{2}(P,H)\times V \rightarrow L^{2}(P,H)$ be stochastic process. $F$ is said be Poisson square-mean almost automorphic process in $t\in \mathbb{R}$ for each $Y\in L^{2}(P,H)$ if $F$ is Poisson continuous and for every sequence of real numbers $(t^{'}_n)_n$ we can extract a subsequence $(t_n)_n$ such that, for some stochastic process $\tilde{F} : \mathbb{R}\times L^{2}(P,H)\times V \rightarrow L^{2}(P,H)$ with $\int_{V}\mathbb{E} \|\tilde{F}(t,Y,x)\|^{2} \nu(dx)<\infty$ such that
$$\lim_{n\rightarrow +\infty}\int_{V}\mathbb{E}\|F(t + t_{n},Y,x) - \tilde{F}(t,Y,x)\|^{2}\nu(dx)=0 \quad \mbox{for all}\quad t \in \mathbb{R}$$
and
$$\lim_{n\rightarrow +\infty}\int_{V}\mathbb{E}\|\tilde{F}(t - t_{n},Y,x) - F(t,Y,x)\|^{2}\nu(dx)=0 \quad \mbox{for all}\quad t \in \mathbb{R}.$$
We denote the space off all such stochastic processes by $PSAA(\mathbb{R}\times L^{2}(P,H)\times V ,L^{2}(P,H))$.
\end{definition}

Let $\mathcal{P}(H)$ be the space of all Borel probability measures on $H$ with the $\beta$ metric.
\begin{equation*}
    \beta(\mu,\nu):=\sup\{|\int fd\mu - \int fd\nu|: \|f\|_{BL}\leq1\},\quad \mu,\nu \in \mathcal{P}(H),
\end{equation*}
where f are Lipschitz continuous real-valued functions on $H$ with

\begin{equation*}
    \|f\|_{BL}= \|f\|_{L} + \|f\|_{\infty}, \quad \|f\|_{L} =\sup_{x\neq y}\frac{|f(x) - f(y)|}{\|x - y\|}, \quad \|f\|_{\infty}=\sup_{x\in H}|f(x)|.
    \end{equation*}

\begin{definition}\cite{Lvy}
An $H$-valued stochastic process $Y(t)$ is said to be almost automorphic in distribution if its law $\mu(t)$ is a $\mathcal{P}(H)$-valued almost automorphic mapping, i.e. for every sequence of real numbers $(s'_n)_n$, there exist a subsequence $(s_n)_n$ and a $\mathcal{P}(H)$-valued mapping $\tilde{\mu}(t)$ such that
\begin{equation*}
    \lim_{n \rightarrow \infty}\beta(\mu(t + s_n),\tilde{\mu}(t))=0 \quad \mbox{and}\quad \lim_{n \rightarrow \infty}\beta(\tilde{\mu}(t - s_n),\mu(t))=0
\end{equation*}
hold for each $t \in \mathbb{R}$.
\end{definition}

To study the existence of mild solutions to the stochastic evolution equations (\ref{eq1.1}). we will need the following assumptions,

\begin{enumerate}

\item[(H.1)] The semigroup $T(t)$ is compact for $t > 0$ and is exponentially stable, i.e., there exists constants $K, \omega > 0$ such that
    \begin{equation}\label{semigrup}
    \|T(t)\| \leq Ke^{-\omega t}\quad \mbox{for all} \quad t\geq0.
  \end{equation}

\item[(H.2)]
The functions $f,g$ and $h$ are uniformly continuous on any bounded
subset $K$ of $L^{2}(\Omega,H)$ for each $t \in \mathbb{R}$. $F, G$ are uniformly continuous on any bounded
subset $K$ of $L^{2}(\Omega,H)$ for each $t \in \mathbb{R}$ and $x \in V$. For each bounded subset $K \subset L^{2}(\Omega,H)$, $g(\mathbb{R}, K)$, $f(\mathbb{R}, K)$, $h(\mathbb{R}, K)$ are bounded and $F(\mathbb{R}, K, V)$ and $G(\mathbb{R}, K, V)$ are Poisson stochastically bounded. Moreover
 we suppose that there exists $r > 0$ such that
 \begin{equation}\label{rrr}
 \Delta_{r}\leq \frac{\omega^{2}r}{20\theta K^{2}}
\end{equation}
 where
 \begin{align*}
 \Delta_{r}=\max\Bigg\{&\sup_{t\in \mathbb{R} \|u\|_{L^{2}}\leq r}\Big\|f(t,u)\Big\|_{L^{2}}, \sup_{t\in \mathbb{R} \|u\|_{L^{2}}\leq r}\Big\|g(t,u)\Big\|_{L^{2}}, \sup_{t\in \mathbb{R} \|u\|_{L^{2}}\leq r}\Big\|h(t,u)\Big\|_{L^{2}},\\
  &\sup_{t\in \mathbb{R} \|u\|_{L^{2}}\leq r}\int_{|y|_{V}< 1}\Big\|F(t,u,x)\Big\|_{L^{2}}\nu(dx), \sup_{t\in \mathbb{R} \|u\|_{L^{2}}\leq r}\int_{|y|_{V}\geq 1}\Big\|G(t,u,x)\Big\|_{L^{2}}\nu(dx)  \Bigg\}
\end{align*}
and $$\theta=\max \bigg(1 , \|B_{1}\|^{2}_{L^{1}(0,\infty)} , 4\|B_{2}\|^{2}_{L^{2}(0,\infty)} , 2b \|B_{2}\|^{2}_{L^{1}(0,\infty)}\bigg)$$

\item[(H.3)]
    we suppose that there exist measurable functions $m_g$, $m_f$ $m_h$, $m_F$, $m_G:$ $\mathbb{R}\longrightarrow [0,\infty)$
such that

\begin{equation}\label{Lip1}
\mathbb{E}\parallel g(t,Y)-g(t,Z) \parallel^{2}\leq m_g(t)\cdot\mathbb{E}\parallel Y - Z \parallel^{2}
\end{equation}
\begin{equation}\label{Lip2}
\mathbb{E}\parallel f(t,Y)-f(t,Z) \parallel^{2}\leq m_f(t)\cdot\mathbb{E}\parallel Y - Z \parallel^{2}
\end{equation}
\begin{equation}\label{Lip3}
\mathbb{E}\parallel (h(t,Y)-h(t,Z))Q^{\frac{1}{2}} \parallel^{2}_{L(V,L^{2}(P,H))}\leq m_h(t)\cdot \mathbb{E}\parallel Y - Z \parallel^{2}
\end{equation}
\begin{equation}\label{Lip4}
\int_{|x|_{V}<1}\mathbb{E}\|F(t,Y,x) - F(t,Z,x)\|^{2}\nu(dx) \leq m_F(t)\cdot\mathbb{E}\|Y - Z\|^{2}
\end{equation}
\begin{equation}\label{Lip5}
\int_{|x|_{V}\geq1}\mathbb{E}\|G(t,Y,x) - G(t,Z,x)\|^{2}\nu(dx) \leq m_G(t)\cdot\mathbb{E}\|Y - Z\|^{2}
\end{equation}
for all $t \in \mathbb{R}$ and  for any $Y,Z \in L^{2}(P,H)$.

\item[(H.4)] If $(u_n)_{n \in \N} \subset SBC(\mathbb{R},L^{2}(P,H))$ is uniformly bounded and uniformly convergent upon every compact subset of $\mathbb{R}$, then $g(\cdot, u_n(\cdot))$, $f(\cdot, u_n(\cdot))$, $h(\cdot, u_n(\cdot))$, and $F(\cdot, u_n(\cdot),\cdot)$, $G(\cdot, u_n(\cdot),\cdot)$ are relatively compact in $SBC(\mathbb{R},L^{2}(P,H))$, $PSBC(\mathbb{R}\times V,L^{2}(P,H))$, respectively.

\end{enumerate}

\begin{definition}
An $\mathcal{F}_{t}$-progressively measurable process $\{x(t)\}_{t\in \mathbb{R}}$ is called a mild solution on $\mathbb{R}$ of equation (\ref{eq1.1})
if it satisfies the corresponding stochastic integral equation\\
\begin{eqnarray}\label{eq1.2}
    x(t)&=&T(t-a)x(a) + \int_{a}^{t}T(t-s)g(s,x(s))ds\\ \nonumber
    &+& \int_{a}^{t}T(t-\sigma)\int_{a}^{\sigma}B_{1}(\sigma-s)f(s,x(s))dsd\sigma\\ \nonumber
    &+& \int_{a}^{t}T(t-\sigma)\int_{a}^{\sigma}B_{2}(\sigma-s)h(s,x(s))dW(s)d\sigma\\ \nonumber
     &+& \int_{a}^{t}T(t-\sigma)\int_{a}^{\sigma}B_{2}(\sigma-s) \int_{|y|_{V}<1}F(s,x(s-),y)\tilde{N}(ds,dy)d\sigma\\\nonumber
    &+& \int_{a}^{t}T(t-\sigma)\int_{a}^{\sigma}B_{2}(\sigma-s)\int_{|y|_{V}\geq 1}G(s,x(s-),y)N(ds,dy)d\sigma
    \end{eqnarray}
for all $t\geq a$.\\
\end{definition}
\begin{remark}
If we let $a\rightarrow-\infty$ in the stochastic integral equation (\ref{eq1.2}), by the exponential dissipation condition of $(T(t))_{t\geq0}$, then we obtain the stochastic process $x : \mathbb{R} \rightarrow L^{2}(\Omega,\mathbb{H})$ is a mild solution of the equation (\ref{eq1.2}) if and only if $x$ satisfies the stochastic integral equation
\begin{eqnarray}\label{eq4.3}
    x(t)&=& \int_{-\infty}^{t}T(t-s)g(s,x(s))ds + \int_{-\infty}^{t}T(t-\sigma)\int_{-\infty}^{\sigma}B_{1}(\sigma-s)f(s,x(s))dsd\sigma\\\nonumber &+&\int_{-\infty}^{t}T(t-\sigma)\int_{-\infty}^{\sigma}B_{2}(\sigma-s)h(s,x(s))dW(s)d\sigma\\\nonumber
    &+& \int_{-\infty}^{t}T(t-\sigma)\int_{-\infty}^{\sigma}B_{2}(\sigma-s) \int_{|y|_{V}<1}F(s,x(s-),y)\tilde{N}(ds,dy)d\sigma\\\nonumber
    &+& \int_{-\infty}^{t}T(t-\sigma)\int_{-\infty}^{\sigma}B_{2}(\sigma-s)\int_{|y|_{V}\geq 1}G(s,x(s-),y)N(ds,dy)d\sigma.
\end{eqnarray}
\end{remark}
\section{\textbf{Square-mean almost automorphic solutions}}
This section is devoted to the existence and the uniqueness of the square-mean almost automorphic mild solution in distribution on $\mathbb{R}$ of Eq. (\ref{eq1.1}).

Define the following integral operator,
\begin{eqnarray*}
   (\Lambda x)(t)&=& \int_{-\infty}^{t}T(t-s)g(s,x(s))ds + \int_{-\infty}^{t}T(t-\sigma)\int_{-\infty}^{\sigma}B_{1}(\sigma-s)f(s,x(s))dsd\sigma\\\nonumber &+&\int_{-\infty}^{t}T(t-\sigma)\int_{-\infty}^{\sigma}B_{2}(\sigma-s)h(s,x(s))dW(s)d\sigma\\\nonumber
    &+& \int_{-\infty}^{t}T(t-\sigma)\int_{-\infty}^{\sigma}B_{2}(\sigma-s) \int_{|y|_{V}<1}F(s,x(s-),y)\tilde{N}(ds,dy)d\sigma\\\nonumber
    &+& \int_{-\infty}^{t}T(t-\sigma)\int_{-\infty}^{\sigma}B_{2}(\sigma-s)\int_{|y|_{V}\geq 1}G(s,x(s-),y)N(ds,dy)d\sigma.
\end{eqnarray*}

We have

\begin{lemma}\label{B} If the semigroup $T(t)$ verifies (\ref{semigrup}) in assumption {\rm (H.1)} and if the functions $f,g$ and $h$ are uniformly continuous on any bounded
subset $K$ of $L^{2}(\Omega,H)$ for each $t \in \mathbb{R}$. $F, G$ are uniformly continuous on any bounded
subset $K$ of $L^{2}(\Omega,H)$ for each $t \in \mathbb{R}$ and $x \in V$. For each bounded subset $K \subset L^{2}(\Omega,H)$, $g(\mathbb{R}, K)$, $f(\mathbb{R}, K)$, $h(\mathbb{R}, K)$ are bounded and $F(\mathbb{R}, K, V)$ and $G(\mathbb{R}, K, V)$ are Poisson stochastically bounded., then the mapping $\Lambda: SBC(\mathbb{R},L^{2}(P,H)) \rightarrow SBC(\mathbb{R},L^{2}(P,H))$ is well-defined and continuous.

\end{lemma}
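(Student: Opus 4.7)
The plan is to decompose $\Lambda$ into its five natural summands $\Lambda_1,\dots,\Lambda_5$, corresponding respectively to the drift term in $g$, the two convolution kernels against $f$ and $h$, and the two L\'{e}vy integrals against $F$ and $G$; then for each piece verify separately that (i) $\Lambda_i x(t)$ is a well-defined element of $L^{2}(P,H)$ for every $t \in \mathbb{R}$, (ii) $t \mapsto \Lambda_i x(t)$ is stochastically bounded and stochastically continuous whenever $x \in SBC(\mathbb{R},L^{2}(P,H))$, and (iii) $\Lambda_i$ is continuous as a map on $SBC(\mathbb{R},L^{2}(P,H))$. Under the standing hypotheses, if $x \in SBC$ with $\|x\|_{\infty} \leq r$ then the orbit $\{x(s):s\in\mathbb{R}\}$ lies in a fixed bounded subset of $L^{2}(P,H)$, so the uniform boundedness assumptions give common constants bounding $g(\cdot,x(\cdot))$, $f(\cdot,x(\cdot))$, $h(\cdot,x(\cdot))$ in square-mean and $F(\cdot,x(\cdot),\cdot)$, $G(\cdot,x(\cdot),\cdot)$ in the Poisson square-mean sense.

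To establish (i) and (ii), I would combine the exponential decay $\|T(t-s)\| \leq K e^{-\omega(t-s)}$ with an appropriate integral inequality on each term. For $\Lambda_1$ and $\Lambda_2$ the Cauchy--Schwarz inequality together with $\int_{0}^{\infty} K e^{-\omega s}\,ds = K/\omega$ and $B_1 \in L^{1}(0,\infty)$ yields $\|\Lambda_i x(t)\|_{L^{2}}^{2} \leq C_i$ uniformly in $t$. For $\Lambda_3$ the It\^o isometry together with $\|B_2\|_{L^{2}(0,\infty)}$ absorbs the inner Wiener integral, after which the outer $T$-weighted integration is handled as in $\Lambda_1$. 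For $\Lambda_4$ the L\'{e}vy--It\^{o} isometry on $\{|y|_V < 1\}$ controls the inner compensated Poisson integral. The stochastic continuity in $t$ follows from the splitting $\Lambda_i x(t+h) - \Lambda_i x(t) = (T(h)-I)\Lambda_i x(t) + \int_{t}^{t+h}(\cdots)$ together with the strong continuity of $T$ and the vanishing of the integral over a shrinking window.

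The main obstacle, and the only point requiring genuine care, is $\Lambda_5$: the jump integral against the uncompensated measure $N$ on $\{|y|_V \geq 1\}$ is not a martingale integrand, so the It\^o isometry is unavailable directly. I would write $N = \widetilde{N} + \nu(dy)\,ds$ and split
\begin{equation*}
\int_{|y|_V \geq 1} G(s,x(s-),y)\,N(ds,dy) = \int_{|y|_V \geq 1} G\,\widetilde{N}(ds,dy) + \int_{|y|_V \geq 1} G\,\nu(dy)\,ds,
\end{equation*}
treating the first summand by the It\^o isometry and the second by Cauchy--Schwarz via the finite quantity $b = \int_{|y|_V \geq 1}\nu(dy) < \infty$ supplied by Remark~\ref{rem}. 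This produces exactly the factor $2b\|B_2\|_{L^{1}}^{2}$ that appears inside $\theta$, confirming that the constants in (H.2) are tailored to these five estimates.

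Finally, for (iii), I would take $x_n \to x$ in $SBC$ with all $x_n$ in a fixed ball and use the uniform continuity of $g,f,h,F,G$ on bounded subsets of $L^{2}(P,H)$ (with $F,G$ in the Poisson sense) to conclude $\sup_{s \in \mathbb{R}} \mathbb{E}\|g(s, x_n(s)) - g(s, x(s))\|^{2} \to 0$ and the analogous statements for the other four nonlinearities. Feeding these convergences back into the five estimates already obtained gives $\|\Lambda x_n - \Lambda x\|_{\infty} \to 0$, which finishes the continuity of $\Lambda$.
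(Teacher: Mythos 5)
Your proposal is correct and follows essentially the same route as the paper: the same five-term decomposition, Cauchy--Schwarz against the exponentially decaying semigroup for the $g$ and $f$ terms, the It\^o isometry for the Wiener and compensated Poisson integrals, and the splitting $N=\widetilde{N}+\nu(dy)\,ds$ with the finite mass $b$ for the large-jump term, feeding the uniform-continuity hypotheses into these estimates to get continuity of $\Lambda$. The only difference is that you spell out the well-definedness and stochastic continuity in $t$ of each summand, which the paper dismisses as ``easy to see'' and only treats later in the equicontinuity step of Theorem \ref{Th1}.
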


\begin{proof}
It is easy to see that $S$ is well-defined. To complete the proof it remains to show that $\Lambda$ is continuous. Consider an arbitrary sequence of functions $u_n \in SBC(\mathbb{R},L^{2}(P,H))$ that converges uniformly to some $u \in SBC(\mathbb{R},L^{2}(P,H))$, that is,
$\big\|u_n -u\big\|_{\infty} \to 0 \quad \mbox{as} \ \ n \to \infty.$  There exists a bounded subset $K$ of $L^{2}(\Omega,H)$ such that $u_n(t) ,u(t) \in K$ for each $t \in \mathbb{R}$ and $n=1,2,...$. By assumptions, given $\epsilon>0$, there exist $\delta>0$ and $N>0$ such that $\mathbb{E}\|u_n(t) - u(t) \|^{2}< \delta$ imply that
\begin{equation*}
\mathbb{E}\parallel g(t,u_n(t))-g(t,u(t)) \parallel^{2}<\frac{\omega^{2}\epsilon}{25K^{2}}
\end{equation*}
\begin{equation*}
\mathbb{E}\parallel f(t,u_n(t))-f(t,u(t)) \parallel^{2}<\frac{\omega^{2}\epsilon}{25K^{2}(\|B_{1}\|^{2}_{L^{1}(0,\infty)}+1)}
\end{equation*}
\begin{equation*}
\mathbb{E}\parallel h(t,u_n(t))-h(t,u(t)) Q^{\frac{1}{2}}\parallel^{2}<\frac{\omega^{2}\epsilon}{25K^{2}(\|B_{2}\|^{2}_{L^{2}(0,\infty)}+1)}
\end{equation*}
\begin{equation*}
    \int_{|x|_{V}<1}\mathbb{E}\|F(t,u_n(t),x) - F(t,u(t),x)\|^{2}\nu(dx) <\frac{\omega^{2}\epsilon}{25K^{2}(\|B_{2}\|^{2}_{L^{2}(0,\infty)}+1)}
\end{equation*}
\begin{equation*}
    \int_{|x|_{V}<1}\mathbb{E}\|G(t,u_n(t),x) - G(t,u(t),x)\|^{2}\nu(dx) <\frac{\omega^{2}\epsilon}{50K^{2}(\|B_{2}\|^{2}_{L^{2}(0,\infty)}+ b\|B_{2}\|^{2}_{L^{1}(0,\infty)}+1)}
\end{equation*}
Hence
\begin{align*}
\mathbb{E}\|(\Lambda u_n)(t) - &(\Lambda u)(t)\|^{2} =\mathbb{E}\bigg\| \int_{-\infty}^{t}T(t-s)\bigg(g(s,u_n(s))-g(s,u(s))\bigg)ds\\
&+ \int_{-\infty}^{t}T(t-\sigma)\int_{-\infty}^{\sigma}B_{1}(\sigma-s)\bigg(f(s,u_n(s))-f(s,u(s))\bigg)dsd\sigma\\\nonumber &+\int_{-\infty}^{t}T(t-\sigma)\int_{-\infty}^{\sigma}B_{2}(\sigma-s)\bigg(h(s,u_n(s))-h(s,u(s))\bigg)dW(s)d\sigma\\\nonumber
    &+\int_{-\infty}^{t}T(t-\sigma)\int_{-\infty}^{\sigma}B_{2}(\sigma-s) \int_{|y|_{V}<1}\bigg(F(s,u_n(s-),y)-F(s,u(s-),y))\bigg)
    \tilde{N}(ds,dy)d\sigma\\\nonumber
    &+ \int_{-\infty}^{t}T(t-\sigma)\int_{-\infty}^{\sigma}B_{2}(\sigma-s)\int_{|y|_{V}\geq 1}\bigg(G(s,u_n(s-),y)-G(s,u(s-),y))\bigg)N(ds,dy)d\sigma\bigg\|^{2}\\
 \end{align*}
 \begin{align*}
&\leq 5\mathbb{E}\bigg\| \int_{-\infty}^{t}T(t-s)\bigg(g(s,u_n(s))-g(s,u(s))\bigg)ds\bigg\|^{2}\\
&+ 5\mathbb{E}\bigg\| \int_{-\infty}^{t}T(t-\sigma)\int_{-\infty}^{\sigma}B_{1}(\sigma-s)\bigg(f(s,u_n(s))-f(s,u(s))\bigg)dsd\sigma\bigg\|^{2}\\
&+5\mathbb{E}\bigg\|\int_{-\infty}^{t}T(t-\sigma)\int_{-\infty}^{\sigma}B_{2}(\sigma-s)\bigg(h(s,u_n(s))-h(s,u(s))\bigg)dW(s)d\sigma\bigg\|^{2}\\
&+ 5\mathbb{E}\bigg\|\int_{-\infty}^{t}T(t-\sigma)\int_{-\infty}^{\sigma}B_{2}(\sigma-s) \int_{|y|_{V}<1}\bigg(F(s,u_n(s-),y)-F(s,u(s-),y))\bigg)
    \tilde{N}(ds,dy)d\sigma\bigg\|^{2}\\
&+ 5\mathbb{E}\bigg\|\int_{-\infty}^{t}T(t-\sigma)\int_{-\infty}^{\sigma}B_{2}(\sigma-s)\int_{|y|_{V}\geq 1}\bigg(G(s,u_n(s-),y)-G(s,u(s-),y))\bigg)N(ds,dy)d\sigma\bigg\|^{2}\\
&\leq 5(I_1 + I_2 + I_3 + I_4 + I_5).
\end{align*}
Using Cauchy-Schwartz's inequality, we get
\begin{align*}
    I_1< \frac{\epsilon}{25}\quad\quad I_2< \frac{\epsilon}{25}
\end{align*}
For $I_3$, using Cauchy-Schwartz's inequality and Ito's isometry property, we obtain
\begin{align*}
    I_3\leq \frac{K^{2}}{\omega}\int_{-\infty}^{t}e^{-\omega(t-\sigma)}\int_{-\infty}^{\sigma}\mathbb{E}\bigg\|B_{2}(\sigma-s)\bigg(h(s,u_n(s))-h(s,u(s))
    \bigg)Q^{\frac{1}{2}}
    \bigg\|^{2}dsd\sigma    < \frac{\epsilon}{25}
\end{align*}
As to $I_4$ and $I_5$, by Cauchy-Schwartz's inequality and the properties of the integral for the Poisson random measure, we have
\begin{align*}
    I_4&= \mathbb{E}\bigg\|\int_{-\infty}^{t}T(t-\sigma)\int_{-\infty}^{\sigma}B_{2}(\sigma-s) \int_{|y|_{V}<1}\bigg(F(s,u_n(s-),y)-F(s,u(s-),y))\bigg) \tilde{N}(ds,dy)d\sigma\bigg\|^{2}\\
    &\leq \frac{K^{2}}{\omega}\int_{-\infty}^{t}e^{-\omega(t-\sigma)}\mathbb{E}\bigg\|\int_{-\infty}^{\sigma}B_{2}(\sigma-s) \int_{|y|_{V}<1}\bigg(F(s,u_n(s-),y)-F(s,u(s-),y))\bigg)\tilde{N}(ds,dy)\bigg\|^{2}d\sigma\\
&\leq \frac{K^{2}}{\omega}\int_{-\infty}^{t}e^{-\omega(t-\sigma)}\int_{-\infty}^{\sigma}\|B_{2}(\sigma-s)\|^{2} \int_{|y|_{V}<1}\mathbb{E}\bigg\|F(s,u_n(s-),y)-F(s,u(s-),y))\bigg\|^{2}\nu(dy)d\sigma\\
&<\frac{\epsilon}{25}.
    \end{align*}
And
\begin{align*}
    I_5&= \mathbb{E}\bigg\|\int_{-\infty}^{t}T(t-\sigma)\int_{-\infty}^{\sigma}B_{2}(\sigma-s) \int_{|y|_{V}\geq1}\bigg(F(s,u_n(s-),y)-F(s,u(s-),y))\bigg) N(ds,dy)d\sigma\bigg\|^{2}\\
    &\leq 2\mathbb{E}\bigg\|\int_{-\infty}^{t}T(t-\sigma)\int_{-\infty}^{\sigma}B_{2}(\sigma-s) \int_{|y|_{V}\geq1}\bigg(F(s,u_n(s-),y)-F(s,u(s-),y))\bigg) \tilde{N}(ds,dy)d\sigma\bigg\|^{2}\\
    &+ 2\mathbb{E}\bigg\|\int_{-\infty}^{t}T(t-\sigma)\int_{-\infty}^{\sigma}B_{2}(\sigma-s) \int_{|y|_{V}\geq1}\bigg(F(s,u_n(s-),y)-F(s,u(s-),y))\bigg) \nu(dy)d\sigma\bigg\|^{2}\\
    &\leq \frac{2K^{2}}{\omega^{2}}\|B_{2}\|^{2}_{L^{2}(0,\infty)}\Bigg(\frac{\omega^{2}\epsilon}{50K^{2}(\|B_{2}\|^{2}_{L^{2}(0,\infty)}+ b\|B_{2}\|^{2}_{L^{1}(0,\infty)}+1)}\Bigg)\\
    &+ 2 \frac{K^{2}}{\omega}\int_{-\infty}^{t}e^{-\omega(t-\sigma)}\mathbb{E}\bigg\|\int_{-\infty}^{\sigma}B_{2}(\sigma-s) \int_{|y|_{V}\geq1}\bigg(F(s,u_n(s-),y)-F(s,u(s-),y))\bigg) \nu(dy)ds\bigg\|^{2}d\sigma\\
    &\leq \frac{2K^{2}}{\omega^{2}}\|B_{2}\|^{2}_{L^{2}(0,\infty)}\Bigg(\frac{\omega^{2}\epsilon}{50K^{2}(\|B_{2}\|^{2}_{L^{2}(0,\infty)}+ b\|B_{2}\|^{2}_{L^{1}(0,\infty)}+1)}\Bigg)
    + 2 \frac{K^{2}}{\omega}\int_{-\infty}^{t}e^{-\omega(t-\sigma)}\bigg(\int_{-\infty}^{\sigma}\bigg\|B_{2}(\sigma-s)\bigg\|ds\int_{|y|_{V}\geq1}\nu(dy)\\ &~~~~~~\cdot \int_{-\infty}^{\sigma}\bigg\|B_{2}(\sigma-s)\bigg\|\int_{|y|_{V}\geq1}\mathbb{E}\bigg\|F(s,u_n(s-),y)-F(s,u(s-),y) \bigg\|^{2}\nu(dy)ds\bigg)d\sigma\\
     &\leq \frac{2K^{2}}{\omega^{2}}\|B_{2}\|^{2}_{L^{2}(0,\infty)}\Bigg(\frac{\omega^{2}\epsilon}{50K^{2}(\|B_{2}\|^{2}_{L^{2}(0,\infty)}+ b\|B_{2}\|^{2}_{L^{1}(0,\infty)}+1)}\Bigg)\\
    &+ 2b \frac{K^{2}}{\omega}\|B_{2}\|^{2}_{L^{1}(0,\infty)}\Bigg(\frac{\omega^{2}\epsilon}{50K^{2}(\|B_{2}\|^{2}_{L^{2}(0,\infty)}+ b\|B_{2}\|^{2}_{L^{1}(0,\infty)}+1)}\Bigg)\\
    &<\frac{\epsilon}{25}.
    \end{align*}
Thus, by combining $I_{1}-I_{5}$, it follows that for each $t \in \mathbb{R}$ and $n>N$
\begin{equation*}
\mathbb{E}\|(\Lambda u_n)(t) - (\Lambda u)(t)\|^{2} <\epsilon.
\end{equation*}
 This implies that $\Lambda$ is continuous. The proof is complete.
\end{proof}

\begin{theorem} \label{Th1} Assume that assumptions $(H.1)-(H.4)$ hold and
\begin{itemize}
  \item [1-] $g,f \in SAA(\mathbb{R}\times L^{2}(P,H),L^{2}(P,H))$,
  \item [2-]$h \in SAA(\mathbb{R}\times L^{2}(P,H),L(V,L^{2}(P,H)))$,
  \item [3-]$F \in PSAA(\mathbb{R}\times L^{2}(P,H)\times V ,L^{2}(P,H))$
  \item [4-] $G \in PSAA(\mathbb{R}\times L^{2}(P,H)\times V ,L^{2}(P,H))$.
\end{itemize}
then Eq. (\ref{eq1.1}) has at least one almost automorphic in distribution mild solution on $\mathbb{R}$ provided that
\begin{equation*}
    L_g=\sup_{t\in \mathbb{R}} \int_{-\infty}^{t}e^{-\omega(t-s)}m_g(s)ds<\infty, \quad L_f=\sup_{t\in \mathbb{R}} \int_{-\infty}^{t}\|B_{1}(t-s)\|m_f(s)ds<\infty,
\end{equation*}
\begin{equation*}
 L_h=\sup_{t\in \mathbb{R}} \int_{-\infty}^{t}\|B_{2}(t-s)\|^{2}m_h(s)ds<\infty,    \quad  L_F=\sup_{t\in \mathbb{R}} \int_{-\infty}^{t}\|B_{2}(t-s)\|^{2}m_F(s)ds<\infty,
\end{equation*}

\begin{equation*}
    L_G=\sup_{t\in \mathbb{R}} \int_{-\infty}^{t}\|B_{2}(t-s)\|^{2}m_G(s)ds<\infty
\end{equation*}
and
\begin{equation}\label{Cod1}
\vartheta:= 10\frac{K^{2}}{\omega^{2}}\bigg[\omega L_g + L_f\|B_{1}\|_{L^{1}(0,\infty)} + L_h + L_F + 2\bigg(1+b\|B_{2}\|_{L^{1}(0,\infty)}\bigg)L_G\bigg]<1.\,
\end{equation}

\end{theorem}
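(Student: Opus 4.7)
The plan is to apply Schauder's fixed point theorem to $\Lambda$ on a closed bounded convex ball in $SBC(\mathbb{R},L^{2}(P,H))$, obtain a mild solution $x$ of (\ref{eq1.1}), and then upgrade it to an almost automorphic in distribution solution by exploiting hypotheses 1--4 together with (H.3) and the contraction-like condition (\ref{Cod1}). Continuity of $\Lambda$ on $SBC(\mathbb{R},L^{2}(P,H))$ is already furnished by Lemma~\ref{B}, so three tasks remain: a self-map property, relative compactness of the image, and the distributional almost automorphy of the fixed point.

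\textbf{Self-map.} Take $B_{r}=\{u\in SBC(\mathbb{R},L^{2}(P,H)):\|u\|_{\infty}^{2}\le r\}$ with $r$ given by (\ref{rrr}). For $u\in B_{r}$, applying $\mathbb{E}\|\sum_{i=1}^{5}X_{i}\|^{2}\le 5\sum_{i=1}^{5}\mathbb{E}\|X_{i}\|^{2}$, the exponential bound (\ref{semigrup}), Cauchy--Schwarz for the drift and kernel terms, It\^o's isometry for the Brownian piece, the isometry for the compensated Poisson piece, and a splitting of the large-jump term that yields the factor $2b\|B_{2}\|_{L^{1}(0,\infty)}^{2}$ via Remark~\ref{rem}, each of the five summands in $\mathbb{E}\|\Lambda u(t)\|^{2}$ is dominated by $\tfrac{K^{2}}{\omega^{2}}\theta\,\Delta_{r}$. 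Summing and invoking (\ref{rrr}) gives
\[
\|\Lambda u\|_{\infty}^{2}\le \frac{20K^{2}\theta}{\omega^{2}}\Delta_{r}\le r,
\]
so $\Lambda(B_{r})\subset B_{r}$.

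\textbf{Relative compactness.} For each fixed $t\in\mathbb{R}$, split the outer $\sigma$-integral at $t-\varepsilon$; on $(-\infty,t-\varepsilon]$ the factorisation $T(t-\sigma)=T(\varepsilon)T(t-\sigma-\varepsilon)$ combined with the compactness of $T(\varepsilon)$ from (H.1) and the relative compactness of the composed coefficients supplied by (H.4) places the truncated image in a compact subset of $L^{2}(P,H)$; the remainder over $[t-\varepsilon,t]$ is uniformly small in $u\in B_{r}$ by the same isometries used in the self-map step. Equicontinuity in $t$ is inherited from the strong continuity of $T(t)$ together with dominated convergence. An Arzel\`a--Ascoli argument on each compact subinterval of $\mathbb{R}$, joined to a diagonal extraction, gives relative compactness of $\Lambda(B_{r})$ in $SBC(\mathbb{R},L^{2}(P,H))$. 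Schauder's theorem then supplies a fixed point $x\in B_{r}$ which satisfies (\ref{eq4.3}) and is therefore a mild solution of (\ref{eq1.1}).

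\textbf{Almost automorphy in distribution.} This is the step I expect to be the main obstacle. Given an arbitrary sequence $(t'_{n})$, I would extract a common subsequence $(t_{n})$ along which $g,f,h$ admit square-mean almost-automorphic limits $\tilde{g},\tilde{f},\tilde{h}$, $F,G$ admit Poisson almost-automorphic limits $\tilde{F},\tilde{G}$, and the shifted L\'evy process $\widetilde{L}(\cdot):=L(\cdot+t_{n})-L(t_{n})$ retains the law of $L$. Let $\tilde{x}\in B_{r}$ be the mild solution of the limiting equation (built by repeating the Schauder argument) with coefficients $\tilde{g},\tilde{f},\tilde{h},\tilde{F},\tilde{G}$ driven by the shifted noise. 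Comparing $x(t+t_{n})$, viewed through its integral representation against $\widetilde{L}$, with $\tilde{x}(t)$, a direct square-mean estimate using (H.3), the finiteness of $L_{g},L_{f},L_{h},L_{F},L_{G}$, and the same isometries as above yields an inequality of the form
\[
\sup_{t\in\mathbb{R}}\mathbb{E}\|x(t+t_{n})-\tilde{x}(t)\|^{2}\le \varepsilon_{n}+\vartheta\sup_{t\in\mathbb{R}}\mathbb{E}\|x(t+t_{n})-\tilde{x}(t)\|^{2},
\]
with $\varepsilon_{n}\to 0$ coming from the almost-automorphic convergence of the coefficients. Since $\vartheta<1$ by (\ref{Cod1}), the supremum vanishes as $n\to\infty$; mean-square convergence implies convergence of the associated laws in the $\beta$ metric, and the reverse convergence is established identically. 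This delivers almost automorphy of $\mu(t)=\mathrm{Law}(x(t))$ in $(\mathcal{P}(H),\beta)$ and completes the proof.
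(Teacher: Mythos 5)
Your proposal follows essentially the same route as the paper: the same ball $B_r$ with the self-map bound from (\ref{rrr}), the same $T(\varepsilon)$-truncation plus equicontinuity plus Arzel\`a--Ascoli and (H.4) to apply Schauder, and the same final comparison of a process equal in law to $x(\cdot+t_n)$ against the limit process $\tilde{x}$, closed by the $\vartheta<1$ absorption argument from (\ref{Cod1}). No substantive differences to report.
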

\begin{proof}
Let $B = \{u \in SBC(\mathbb{R},L^{2}(P,H)): \|u\|^{2}_{\infty} \leq r\}$. By Lemma \ref{B} and \eqref{rrr}, it follows that $B$ is a convex and closed set satisfying $\Lambda B \subset B$.
To complete the proof, we have to prove the following statements:
\begin{enumerate}
\item[a)] That $V = \{ \Lambda u(t): u \in  B\}$ is a relatively compact subset of $L^{2}(P,H)$ for each $t \in \mathbb{R}$;
\item[b)] That $U = \{ \Lambda u: u \in B\} \subset SBC(\mathbb{R},L^{2}(P,H)$ is equi-continuous.
\item[c)] The mild solution is almost automorphic in distribution.
\end{enumerate}

To prove $a)$, fix $t \in \mathbb{R}$ and consider an arbitrary $\varepsilon > 0$. Then
\begin{eqnarray*}
   (\Lambda_{\epsilon} x)(t)&=& \int_{-\infty}^{t-\epsilon}T(t-s)g(s,x(s))ds + \int_{-\infty}^{t-\epsilon}T(t-\sigma)\int_{-\infty}^{\sigma}B_{1}(\sigma-s)f(s,x(s))dsd\sigma\\\nonumber &+&\int_{-\infty}^{t-\epsilon}T(t-\sigma)\int_{-\infty}^{\sigma}B_{2}(\sigma-s)h(s,x(s))dW(s)d\sigma\\\nonumber
    &+& \int_{-\infty}^{t-\epsilon}T(t-\sigma)\int_{-\infty}^{\sigma}B_{2}(\sigma-s) \int_{|y|_{V}<1}F(s,x(s-),y)\tilde{N}(ds,dy)d\sigma\\\nonumber
    &+& \int_{-\infty}^{t-\epsilon}T(t-\sigma)\int_{-\infty}^{\sigma}B_{2}(\sigma-s)\int_{|y|_{V}\geq 1}G(s,x(s-),y)N(ds,dy)d\sigma \\\nonumber &=& T(\epsilon)\int_{-\infty}^{t-\epsilon}T(t-\epsilon-s)g(s,x(s))ds + \int_{-\infty}^{t-\epsilon}T(t-\epsilon-\sigma)\int_{-\infty}^{\sigma}B_{1}(\sigma-s)f(s,x(s))dsd\sigma\\\nonumber &+&\int_{-\infty}^{t-\epsilon}T(t-\epsilon-\sigma)\int_{-\infty}^{\sigma}B_{2}(\sigma-s)h(s,x(s))dW(s)d\sigma\\\nonumber
    &+& \int_{-\infty}^{t-\epsilon}T(t-\epsilon-\sigma)\int_{-\infty}^{\sigma}B_{2}(\sigma-s) \int_{|y|_{V}<1}F(s,x(s-),y)\tilde{N}(ds,dy)d\sigma\\\nonumber
    &+& \int_{-\infty}^{t-\epsilon}T(t-\epsilon-\sigma)\int_{-\infty}^{\sigma}B_{2}(\sigma-s)\int_{|y|_{V}\geq 1}G(s,x(s-),y)N(ds,dy)d\sigma\\\nonumber
    &=& T(\epsilon)(\Lambda x)(t-\epsilon)
\end{eqnarray*}
and hence $V_\varepsilon := \{ (\Lambda_{\varepsilon} )x(t): x \in B\}$ is relatively compact in $L^{2}(P,H)$ as the semigroup family $T(\varepsilon)$ is compact by assumption.

Now
\begin{align*}
\mathbb{E}\|(\Lambda u)(t) - (\Lambda_{\epsilon} u)(t)\|^{2} &=\mathbb{E}\bigg\| \int_{t-\epsilon}^{t}T(t-s)g(s,u(s))ds
+ \int_{t-\epsilon}^{t}T(t-\sigma)\int_{-\infty}^{\sigma}B_{1}(\sigma-s)f(s,u(s))dsd\sigma\\\nonumber &+\int_{-\infty}^{t}T(t-\sigma)\int_{-\infty}^{\sigma}B_{2}(\sigma-s)\bigg(h(s,u_n(s))-h(s,u(s)dW(s)d\sigma\\\nonumber
    &+\int_{t-\epsilon}^{t}T(t-\sigma)\int_{-\infty}^{\sigma}B_{2}(\sigma-s) \int_{|y|_{V}<1}F(s,u(s-),y)
    \tilde{N}(ds,dy)d\sigma\\\nonumber
    &+ \int_{t-\epsilon}^{t}T(t-\sigma)\int_{-\infty}^{\sigma}B_{2}(\sigma-s)\int_{|y|_{V}\geq 1}G(s,u(s-),y)N(ds,dy)d\sigma.\bigg\|^{2}
    \end{align*}
    \begin{align*}
&\leq 5\mathbb{E}\bigg\| \int_{t-\epsilon}^{t}T(t-s)g(s,u(s))ds\bigg\|^{2}+ 5\mathbb{E}\bigg\| \int_{t-\epsilon}^{t}T(t-\sigma)\int_{-\infty}^{\sigma}B_{1}(\sigma-s)f(s,u(s))dsd\sigma\bigg\|^{2}\\
&+5\mathbb{E}\bigg\|\int_{t-\epsilon}^{t}T(t-\sigma)\int_{-\infty}^{\sigma}B_{2}(\sigma-s)h(s,u(s))dW(s)d\sigma\bigg\|^{2}\\
&+ 5\mathbb{E}\bigg\|\int_{t-\epsilon}^{t}T(t-\sigma)\int_{-\infty}^{\sigma}B_{2}(\sigma-s) \int_{|y|_{V}<1}F(s,u(s-),y)
    \tilde{N}(ds,dy)d\sigma\bigg\|^{2}\\
&+ 5\mathbb{E}\bigg\|\int_{t-\epsilon}^{t}T(t-\sigma)\int_{-\infty}^{\sigma}B_{2}(\sigma-s)\int_{|y|_{V}\geq 1}G(s,u(s-),y)N(ds,dy)d\sigma\bigg\|^{2}\\
&\leq 5\bigg[\Delta_{r}K^{2}\bigg(\int_{t-\epsilon}^{t}e^{-\omega(t-\sigma)}d\sigma \bigg)^{2}+ \Delta_{r}K^{2}\|B_{1}\|^{2}_{L^{1}(0,\infty)}\bigg(\int_{t-\epsilon}^{t}e^{-\omega(t-\sigma)}d\sigma \bigg)^{2}\\ &+\Delta_{r}K^{2}\|B_{2}\|^{2}_{L^{2}(0,\infty)}\bigg(\int_{t-\epsilon}^{t}e^{-\omega(t-\sigma)}d\sigma \bigg)^{2}
 + \Delta_{r}K^{2}\|B_{2}\|^{2}_{L^{2}(0,\infty)}\bigg(\int_{t-\epsilon}^{t}e^{-\omega(t-\sigma)}d\sigma \bigg)^{2}\\
 &+\bigg(2\Delta_{r}K^{2}\|B_{2}\|^{2}_{L^{2}(0,\infty)}+ 2b\Delta_{r}K^{2}\|B_{2}\|^{2}_{L^{1}(0,\infty)}\bigg) \bigg(\int_{t-\epsilon}^{t}e^{-\omega(t-\sigma)}d\sigma \bigg)^{2}\bigg]\\
 &\leq 5\Delta_{r}K^{2}\bigg[1+\|B_{1}\|^{2}_{L^{1}(0,\infty)}+ 4\|B_{2}\|^{2}_{L^{2}(0,\infty)}+2b\|B_{2}\|^{2}_{L^{1}(0,\infty)}\bigg]\epsilon^{2}
\end{align*}
from which it follows that $V = \{ \Lambda u(t): u \in  B\}$ is a relatively compact subset of $L^{2}(P,H)$ for each $t \in \mathbb{R}$.

We now show that $b)$ holds. Let $u\in B$ and $t_1,t_2 \in \mathbb{R}$ such that $t_1<t_2$. Similar computation as that in Lemma \ref{B}, we have
\begin{eqnarray*}
    \mathbb{E}\|(\Lambda u)(t_1) - (\Lambda u)(t_2)\|^{2} &=&\mathbb{E}\bigg\|\int_{-\infty}^{t_2}T(t_2-s)g(s,x(s))ds + \int_{-\infty}^{t_2}T(t_2-\sigma)\int_{-\infty}^{\sigma}B_{1}(\sigma-s)f(s,x(s))dsd\sigma\\\nonumber  &+&\int_{-\infty}^{t_2}T(t_2-\sigma)\int_{-\infty}^{\sigma}B_{2}(\sigma-s)h(s,x(s))dW(s)d\sigma\\\nonumber
    &+& \int_{-\infty}^{t_2}T(t_2-\sigma)\int_{-\infty}^{\sigma}B_{2}(\sigma-s) \int_{|y|_{V}<1}F(s,x(s-),y)\tilde{N}(ds,dy)d\sigma\\\nonumber
    &+& \int_{-\infty}^{t_2}T(t_2-\sigma)\int_{-\infty}^{\sigma}B_{2}(\sigma-s)\int_{|y|_{V}\geq 1}G(s,x(s-),y)N(ds,dy)d\sigma\\\nonumber
    &-&\bigg(\int_{-\infty}^{t_1}T(t_1-s)g(s,x(s))ds + \int_{-\infty}^{t_1}T(t_1-\sigma)\int_{-\infty}^{\sigma}B_{1}(\sigma-s)f(s,x(s))dsd\sigma\\\nonumber  &+&\int_{-\infty}^{t_1}T(t_1-\sigma)\int_{-\infty}^{\sigma}B_{2}(\sigma-s)h(s,x(s))dW(s)d\sigma\\\nonumber
    &+& \int_{-\infty}^{t_1}T(t_1-\sigma)\int_{-\infty}^{\sigma}B_{2}(\sigma-s) \int_{|y|_{V}<1}F(s,x(s-),y)\tilde{N}(ds,dy)d\sigma\\\nonumber
    &+& \int_{-\infty}^{t_1}T(t_1-\sigma)\int_{-\infty}^{\sigma}B_{2}(\sigma-s)\int_{|y|_{V}\geq 1}G(s,x(s-),y)N(ds,dy)d\sigma\bigg)\bigg\|^{2}\\\nonumber
                \end{eqnarray*}
    \begin{eqnarray*}
 &=&\mathbb{E}\bigg\|\bigg(T(t_2 -t_1)- I\bigg)\bigg[\int_{-\infty}^{t_1}T(t_1-s)g(s,x(s))ds + \int_{-\infty}^{t_1}T(t_1-\sigma)\int_{-\infty}^{\sigma}B_{1}(\sigma-s)f(s,x(s))dsd\sigma\\\nonumber  &+&\int_{-\infty}^{t_1}T(t_1-\sigma)\int_{-\infty}^{\sigma}B_{2}(\sigma-s)h(s,x(s))dW(s)d\sigma\\\nonumber
    &+& \int_{-\infty}^{t_1}T(t_1-\sigma)\int_{-\infty}^{\sigma}B_{2}(\sigma-s) \int_{|y|_{V}<1}F(s,x(s-),y)\tilde{N}(ds,dy)d\sigma\\\nonumber
    &+& \int_{-\infty}^{t_1}T(t_1-\sigma)\int_{-\infty}^{\sigma}B_{2}(\sigma-s)\int_{|y|_{V}\geq 1}G(s,x(s-),y)N(ds,dy)d\sigma\bigg]\\\nonumber
    &+&\int_{t_1}^{t_2}T(t_2-s)g(s,x(s))ds + \int_{t_1}^{t_2}T(t_2-\sigma)\int_{-\infty}^{\sigma}B_{1}(\sigma-s)f(s,x(s))dsd\sigma\\\nonumber  &+&\int_{t_1}^{t_2}T(t_2-\sigma)\int_{-\infty}^{\sigma}B_{2}(\sigma-s)h(s,x(s))dW(s)d\sigma\\\nonumber
    &+& \int_{t_1}^{t_2}T(t_2-\sigma)\int_{-\infty}^{\sigma}B_{2}(\sigma-s) \int_{|y|_{V}<1}F(s,x(s-),y)\tilde{N}(ds,dy)d\sigma\\\nonumber
    &+& \int_{t_1}^{t_2}T(2-\sigma)\int_{-\infty}^{\sigma}B_{2}(\sigma-s)\int_{|y|_{V}\geq 1}G(s,x(s-),y)N(ds,dy)d\sigma\bigg\|^{2}\\\nonumber
    &\leq& 6\sup_{y\in V}\mathbb{E}\bigg\|\bigg(T(t_2 -t_1)- I\bigg)y\bigg\|^{2}+6\mathbb{E}\bigg\|\int_{t_1}^{t_2}T(t_2-s)g(s,x(s))ds\bigg\|^{2}\\\nonumber
    &+& 6\mathbb{E}\bigg\|\int_{t_1}^{t_2}T(t_2-\sigma)\int_{-\infty}^{\sigma}B_{1}(\sigma-s)f(s,x(s))dsd\sigma\bigg\|^{2}\\\nonumber
    &+& 6\mathbb{E}\bigg\|\int_{t_1}^{t_2}T(t_2-\sigma)\int_{-\infty}^{\sigma}B_{2}(\sigma-s)h(s,x(s))dW(s)d\sigma\bigg\|^{2}\\\nonumber
    &+& 6\mathbb{E}\bigg\|\int_{t_1}^{t_2}T(t_2-\sigma)\int_{-\infty}^{\sigma}B_{2}(\sigma-s) \int_{|y|_{V}<1}F(s,x(s-),y)\tilde{N}(ds,dy)d\sigma\bigg\|^{2}\\\nonumber
    &+& 6\mathbb{E}\bigg\|\int_{t_1}^{t_2}T(2-\sigma)\int_{-\infty}^{\sigma}B_{2}(\sigma-s)\int_{|y|_{V}\geq 1}G(s,x(s-),y)N(ds,dy)d\sigma\bigg\|^{2}\\\nonumber
    &\leq& 6\sup_{y\in V}\mathbb{E}\bigg\|\bigg(T(t_2 -t_1)- I\bigg)y\bigg\|^{2}+ 6\bigg[\Delta_{r}K^{2}\bigg(\int_{t_1}^{t_2}e^{-\omega(t_2-\sigma)}d\sigma \bigg)^{2} \\ \nonumber
    &+& \Delta_{r}K^{2}\|B_{1}\|^{2}_{L^{1}(0,\infty)}\bigg(\int_{t-\epsilon}^{t}e^{-\omega(t-\sigma)}d\sigma \bigg)^{2}\\ \nonumber
    &+& \Delta_{r}K^{2}\|B_{2}\|^{2}_{L^{2}(0,\infty)}\bigg(\int_{t-\epsilon}^{t}e^{-\omega(t-\sigma)}d\sigma \bigg)^{2}
 + \Delta_{r}K^{2}\|B_{2}\|^{2}_{L^{2}(0,\infty)}\bigg(\int_{t-\epsilon}^{t}e^{-\omega(t-\sigma)}d\sigma \bigg)^{2}\\\nonumber
 &+& \bigg(2\Delta_{r}K^{2}\|B_{2}\|^{2}_{L^{2}(0,\infty)}+ 2b\Delta_{r}K^{2}\|B_{2}\|^{2}_{L^{1}(0,\infty)}\bigg) \bigg(\int_{t-\epsilon}^{t}e^{-\omega(t-\sigma)}d\sigma \bigg)^{2}\bigg]\\\nonumber
 &\leq& 6\sup_{y\in V}\mathbb{E}\bigg\|\bigg(T(t_2 -t_1)- I\bigg)y\bigg\|^{2}\\\nonumber
 &+& 6\Delta_{r}K^{2}\bigg[1+\|B_{1}\|^{2}_{L^{1}(0,\infty)} +4\|B_{2}\|^{2}_{L^{2}(0,\infty)}+ 2b\|B_{2}\|^{2}_{L^{1}(0,\infty)} \bigg] \bigg(\int_{t_1}^{t_2}e^{-\omega(t_2-\sigma)}d\sigma \bigg)^{2}. \\ \nonumber
\end{eqnarray*}
The right-hand side tends to $0$ independently to $u\in B$ as $t_2 \rightarrow t_1$ which implies that $U$ is right equi-continuous at $t$.  Similarly, we can show that $U$ is left equi-continuous at $t$.

Denote the closed convex hull of $\Lambda B$ by $\overline{co}\,{\Lambda B}$. Since $\overline{co}\,{\Lambda B}\subset B$ and $B$ is a closed convex, it follows that
$$\Lambda(\overline{co}\, \Lambda B) \subset \Lambda B \subset \overline{co}\ \Lambda B.$$
Further, it is not hard to see that $\overline{co}\,{\Lambda B}$ is relatively compact in $L^{2}(P,H)$. Using Arzel\`a-Ascoli theorem, we deduce that the restriction of $\overline{co}\,{\Lambda B}$ to any compact subset $I$ of $\mathbb{R}$ is relatively compact in $C(I, L^{2}(P,H))$. Thus condition (H.4) implies that $\Lambda: \overline{co}\,{\Lambda B} \mapsto \overline{co}\,{\Lambda B}$ is a compact operator. In summary, $S: \overline{co}\,{\Lambda B} \mapsto \overline{co}\,{\Lambda B}$ is continuous and compact. Using the Schauder fixed point it follows that $\Lambda$ has a fixed-point.

To end the proof, we have to check this the fixed-point is almost automorphic in distribution. Since $g$, $f$, $h$ are almost automorphic and $F$, $G$ are Poisson almost automorphic, then for every sequence of real numbers $(t^{^{\prime }}_n)_n$ we can extract a subsequence $(t_n)_n$ such that, for some stochastic processes $\widetilde{g}$, $\widetilde{f}$, $\widetilde{h}$, $\widetilde{F}$, $\widetilde{G}$
\begin{equation}\label{1}
\lim_{n\rightarrow +\infty}\mathbb{E}\|g(s+t_{n},X) - \widetilde{g}(s)\|^{2}=0, \quad\quad
\lim_{n\rightarrow +\infty}\mathbb{E}\|\widetilde{g}(s - t_{n},X) - g(s,X)\|^{2}=0;
\end{equation}
\begin{equation}\label{2}
\lim_{n\rightarrow +\infty}\mathbb{E}\|f(s+ t_{n},X) - \widetilde{f}(s,X)\|^{2}=0, \quad\quad
\lim_{n\rightarrow +\infty}\mathbb{E}\|\widetilde{f}(s - t_{n},X) - f(s,X)\|^{2}=0;
\end{equation}
\begin{equation}\label{3}
\lim_{n\rightarrow +\infty}\mathbb{E}\|\bigg(h(s+ t_{n},X) - \widetilde{h}(s,X)\bigg)Q^{\frac{1}{2}}\|^{2}_{L(V,L^{2}(P,H))}=0, \quad\quad
\lim_{n\rightarrow +\infty}\mathbb{E}\|\bigg(\widetilde{h}(s - t_{n},X) - h(s,X)\bigg)Q^{\frac{1}{2}}\|^{2}_{L(V,L^{2}(P,H))}=0;
\end{equation}
\begin{eqnarray}\label{4}
\lim_{n\rightarrow +\infty}\int_{|y|_{V}<1}\mathbb{E}\|F(s+ t_{n},X,y) - \widetilde{F}(s,X,y)\|^{2}\nu(dy)=0,\\\nonumber
\lim_{n\rightarrow +\infty}\int_{|y|_{V}<1}\mathbb{E}\|\widetilde{F}(s - t_{n},X,y) - F(s,X,y)\|^{2}\nu(dy)=0
\end{eqnarray}
and
\begin{eqnarray}\label{5}
\lim_{n\rightarrow +\infty}\int_{|y|_{V}\geq1}\mathbb{E}\|G(s+ t_{n},X,y) - \widetilde{G}(s,X,y)\|^{2}\nu(dy)=0,\\\nonumber
\lim_{n\rightarrow +\infty}\int_{|y|_{V}\geq1}\mathbb{E}\|\widetilde{G}(s - t_{n},X,y) - G(s,X,y)\|^{2}\nu(dy)=0
\end{eqnarray}
hold for each $s\in \mathbb{R}$ and $X \in L^{2}(P,H)$.

For $t \in \mathbb{R}$, we define
\begin{eqnarray*}
    \widetilde{X}(t)&=& \int_{-\infty}^{t}T(t-s)\widetilde{g}(s,\widetilde{X}(s))ds + \int_{-\infty}^{t}T(t-\sigma)\int_{-\infty}^{\sigma}B_{1}(\sigma-s)\widetilde{f}(s,\widetilde{X}(s))dsd\sigma\\\nonumber &+&\int_{-\infty}^{t}T(t-\sigma)\int_{-\infty}^{\sigma}B_{2}(\sigma-s)\widetilde{h}(s,\widetilde{X}(s))dW(s)d\sigma\\\nonumber
    &+& \int_{-\infty}^{t}T(t-\sigma)\int_{-\infty}^{\sigma}B_{2}(\sigma-s) \int_{|y|_{V}<1}\widetilde{F}(s,\widetilde{X}(s-),y)\tilde{N}(ds,dy)d\sigma\\\nonumber
    &+& \int_{-\infty}^{t}T(t-\sigma)\int_{-\infty}^{\sigma}B_{2}(\sigma-s)\int_{|y|_{V}\geq 1}\widetilde{G}(s,\widetilde{X}(s-),y)N(ds,dy)d\sigma.
\end{eqnarray*}
Let $W_{n}(s) = W(s+ t_{n}) - W(t_{n})$, $N_{n}(s,y) = N(s+ t_{n},y) - N(t_{n},x)$ and $\tilde{N}_{n}(s,y) = N(s+ t_{n},y) - N(t_{n},x)$  for each $s \in \mathbb{R}$. Then $W_{n}$ is also a Q-Wiener process having the same distribution as $W$ and $N_{n}$ have the same distribution as $N$ with compensated Poisson random measure $\tilde{N}_{n}$.

Consider the process define as follows
\begin{eqnarray*}
    X_{n}(t)&=& \int_{-\infty}^{t}T(t-s)g(s+ t_{n},X_{n}(s))ds + \int_{-\infty}^{t}T(t-\sigma)\int_{-\infty}^{\sigma}B_{1}(\sigma-s)f(s+ t_{n},X_{n}(s))dsd\sigma\\\nonumber &+&\int_{-\infty}^{t}T(t-\sigma)\int_{-\infty}^{\sigma}B_{2}(\sigma-s)h(s+ t_{n},X_{n}(s))dW(s)d\sigma\\\nonumber
    &+& \int_{-\infty}^{t}T(t-\sigma)\int_{-\infty}^{\sigma}B_{2}(\sigma-s) \int_{|y|_{V}<1}F(s+ t_{n},X_{n}(s-),y)\tilde{N}(ds,dy)d\sigma\\\nonumber
    &+& \int_{-\infty}^{t}T(t-\sigma)\int_{-\infty}^{\sigma}B_{2}(\sigma-s)\int_{|y|_{V}\geq 1}G(s+ t_{n},X_{n}(s-),y)N(ds,dy)d\sigma.
\end{eqnarray*}
Note that $X(t+ t_{n})$ and $X_{n}$ have the same law and since the convergence in $L^{2}$ implies convergence in distribution, then we have
\begin{align*}
    \mathbb{E}\|X_{n}(t) - &\widetilde{X}(t)\|^{2}\\
&\leq 5\mathbb{E}\bigg\| \int_{-\infty}^{t}T(t-s)\bigg(g(s+ t_{n},X_{n}(s))-\widetilde{g}(s,\widetilde{X}(s))\bigg)ds\bigg\|^{2}\\
&+ 5\mathbb{E}\bigg\| \int_{-\infty}^{t}T(t-\sigma)\int_{-\infty}^{\sigma}B_{1}(\sigma-s)\bigg(f(s+ t_{n},X_{n}(s))-\widetilde{f}(s,\widetilde{X}(s))\bigg)dsd\sigma\bigg\|^{2}\\
&+5\mathbb{E}\bigg\|\int_{-\infty}^{t}T(t-\sigma)\int_{-\infty}^{\sigma}B_{2}(\sigma-s)\bigg(h(s+ t_{n},X_{n}(s))-\widetilde{h}(s,\widetilde{X}(s))\bigg)dW(s)d\sigma\bigg\|^{2}\\
&+ 5\mathbb{E}\bigg\|\int_{-\infty}^{t}T(t-\sigma)\int_{-\infty}^{\sigma}B_{2}(\sigma-s) \int_{|y|_{V}<1}\bigg(F(s+ t_{n},X_{n}(s-),y)-\widetilde{F}(s,\widetilde{X}(s-),y))\bigg)
    \tilde{N}(ds,dy)d\sigma\bigg\|^{2}\\
&+ 5\mathbb{E}\bigg\|\int_{-\infty}^{t}T(t-\sigma)\int_{-\infty}^{\sigma}B_{2}(\sigma-s)\int_{|y|_{V}\geq 1}\bigg(G(s+ t_{n},X_{n}(s-),y)-\widetilde{G}(s,\widetilde{X}(s-),y))\bigg)N(ds,dy)d\sigma\bigg\|^{2}\\
&\leq 5(J_1 + J_2 + J_3 + J_4 + J_5).
\end{align*}
For $J_1$, we have
\begin{align*}
   J_1&=\mathbb{E}\bigg\| \int_{-\infty}^{t}T(t-s)\bigg(g(s+ t_{n},X_{n}(s))-\widetilde{g}(s,\widetilde{X}(s))\bigg)ds\bigg\|^{2}\\
    &\leq 2\mathbb{E}\bigg\| \int_{-\infty}^{t}T(t-s)\bigg(g(s+ t_{n},X_{n}(s))-g(s+ t_{n},\widetilde{X}(s))\bigg)ds\bigg\|^{2}\\
    &+ 2\mathbb{E}\bigg\| \int_{-\infty}^{t}T(t-s)\bigg(g(s+ t_{n},\widetilde{X}(s))-\widetilde{g}(s,\widetilde{X}(s))\bigg)ds\bigg\|^{2}\\
    &\leq \frac{2K^{2}}{\omega} \int_{-\infty}^{t}e^{-\omega(t-s)}m_{g}(s+t_{n})\mathbb{E}\bigg\|X_{n}(s)-\widetilde{X}(s)\bigg\|^{2}ds\\
    &+ \frac{2K^{2}}{\omega} \int_{-\infty}^{t}e^{-\omega(t-s)}\mathbb{E}\bigg\|g(s+ t_{n},\widetilde{X}(s))-\widetilde{g}(s,\widetilde{X}(s))\bigg\|^{2}ds\\
    &\leq \frac{2K^{2}}{\omega} \int_{-\infty}^{t}e^{-\omega(t-s)}m_{g}(s+t_{n})\mathbb{E}\bigg\|X_{n}(s)-\widetilde{X}(s)\bigg\|^{2}ds
     + \zeta_{1}^{n},
\end{align*}
where $\zeta_{1}^{n}:=\frac{2K^{2}}{\omega^{2}}\sup_{s\in \mathbb{R}}\mathbb{E}\bigg\|g(s+ t_{n},\widetilde{X}(s))-\widetilde{g}(s,\widetilde{X}(s))\bigg\|^{2}ds$. Since $\widetilde{X}(\cdot)$ is bounded in $L^{2}(P,H)$, it follows by (\ref{1}), that $\zeta_{1}^{n} \rightarrow 0$ as $n\rightarrow \infty$.

For $J_2$, we have
\begin{align*}
   J_2&=\mathbb{E}\bigg\| \int_{-\infty}^{t}T(t-\sigma)\int_{-\infty}^{\sigma}B_{1}(\sigma-s)\bigg(f(s+ t_{n},X_{n}(s))-\widetilde{f}(s,\widetilde{X}(s))\bigg)dsd\sigma\bigg\|^{2}\\
   &\leq 2\mathbb{E}\bigg\| \int_{-\infty}^{t}T(t-\sigma)\int_{-\infty}^{\sigma}B_{1}(\sigma-s)\bigg(f(s+ t_{n},X_{n}(s))-f(s+ t_{n},\widetilde{X}(s))\bigg)dsd\sigma\bigg\|^{2}\\
    &+ 2\mathbb{E}\bigg\| \int_{-\infty}^{t}T(t-\sigma) \int_{-\infty}^{\sigma}B_{1}(\sigma-s)\bigg(f(s+ t_{n},\widetilde{X}(s))-\widetilde{f}(s,\widetilde{X}(s))\bigg)dsd\sigma\bigg\|^{2}\\
    &\leq \frac{2K^{2}}{\omega} \|B_{1}\|_{L^{1}(0,\infty)} \int_{-\infty}^{t}e^{-\omega(t-\sigma)}\int_{-\infty}^{\sigma}\|B_{1}(\sigma-s)\|m_{f}(s+t_{n})\mathbb{E}\bigg\|X_{n}(s)-\widetilde{X}(s)\bigg\|^{2}dsd\sigma\\
    &+\frac{2K^{2}}{\omega} \|B_{1}\|_{L^{1}(0,\infty)} \int_{-\infty}^{t}e^{-\omega(t-\sigma)}\int_{-\infty}^{\sigma}\|B_{1}(\sigma-s)\|\mathbb{E}\|f(s+ t_{n},\widetilde{X}(s))-\widetilde{f}(s,\widetilde{X}(s))\|^{2}dsd\sigma\\
    &\leq \frac{2K^{2}}{\omega} \|B_{1}\|_{L^{1}(0,\infty)} \int_{-\infty}^{t}e^{-\omega(t-\sigma)}\int_{-\infty}^{\sigma}\|B_{1}(\sigma-s)\|m_{f}(s+t_{n})\mathbb{E}\bigg\|X_{n}(s)-\widetilde{X}(s)\bigg\|^{2}dsd\sigma
    + \zeta_{2}^{n},
\end{align*}
where $\zeta_{2}^{n}:=\frac{2K^{2}}{\omega^{2}} \|B_{1}\|^{2}_{L^{1}(0,\infty)} \sup_{s\in \mathbb{R}}\mathbb{E}\|f(s+
t_{n},\widetilde{X}(s))-\widetilde{f}(s,\widetilde{X}(s))\|^{2}$. For the same reason as for $\zeta_{1}^{n}$, $\zeta_{2}^{n} \rightarrow 0$ as $n\rightarrow \infty$.

For $J_3$, using Cauchy-Schwartz's inequality and the Ito's isometry, we have
\begin{align*}
    J_3 &= \mathbb{E}\bigg\|\int_{-\infty}^{t}T(t-\sigma)\int_{-\infty}^{\sigma}B_{2}(\sigma-s)\bigg(h(s+ t_{n},X_{n}(s))-\widetilde{h}(s,\widetilde{X}(s))\bigg)dW(s)d\sigma\bigg\|^{2}\\
    &\leq 2\mathbb{E}\bigg\| \int_{-\infty}^{t}T(t-\sigma)\int_{-\infty}^{\sigma}B_{2}(\sigma-s)\bigg(h(s+ t_{n},X_{n}(s))-h(s+ t_{n},\widetilde{X}(s))\bigg)dW(s)d\sigma\bigg\|^{2}\\
    &+ 2\mathbb{E}\bigg\| \int_{-\infty}^{t}T(t-\sigma) \int_{-\infty}^{\sigma}B_{2}(\sigma-s)\bigg(h(s+ t_{n},\widetilde{X}(s))-\widetilde{h}(s,\widetilde{X}(s))\bigg)dW(s)d\sigma\bigg\|^{2}\\
    &\leq \frac{2K^{2}}{\omega}\int_{-\infty}^{t}e^{-\omega(t-\sigma)}\int_{-\infty}^{\sigma}\|B_{2}(\sigma-s)\|^{2}m_{h}(s+t_{n})\mathbb{E}\bigg\|X_{n}(s)-
    \widetilde{X}(s)\bigg\|^{2}dsd\sigma\\
    &+ \frac{2K^{2}}{\omega}\int_{-\infty}^{t}e^{-\omega(t-\sigma)}\int_{-\infty}^{\sigma}\|B_{2}(\sigma-s)\|^{2}\mathbb{E}\bigg\|\bigg(h(s+ t_{n},\widetilde{X}(s))-\widetilde{h}(s,\widetilde{X}(s))\bigg)Q^{\frac{1}{2}}\bigg\|^{2}_{L(V,L^{2}(P,H))}dsd\sigma\\
     &\leq \frac{2K^{2}}{\omega}\int_{-\infty}^{t}e^{-\omega(t-\sigma)}\int_{-\infty}^{\sigma}\|B_{2}(\sigma-s)\|^{2}m_{h}(s+t_{n})\mathbb{E}\bigg\|X_{n}(s)-
    \widetilde{X}(s)\bigg\|^{2}dsd\sigma + \zeta_{3}^{n},
\end{align*}
where $\zeta_{3}^{n}:=\frac{2K^{2}}{\omega^{2}}\|B_{2}\|^{2}_{L^{2}(0,\infty)}
\sup_{s\in\mathbb{R}}\mathbb{E}\bigg\|\bigg(h(s+t_{n},\widetilde{X}(s))-\widetilde{h}(s,\widetilde{X}(s))\bigg)Q^{\frac{1}{2}}\bigg\|^{2}_{L(V,L^{2}(P,H))}$. By(\ref{3}), it follows that $\zeta_{3}^{n} \rightarrow 0$ as $n\rightarrow \infty$, like $\zeta_{1}^{n}$.

For $J_4$, using Cauchy-Schwartz's inequality and the properties of the integral for the Poisson random measure, we have
\begin{align*}
    J_4 &=\mathbb{E}\bigg\|\int_{-\infty}^{t}T(t-\sigma)\int_{-\infty}^{\sigma}B_{2}(\sigma-s) \int_{|y|_{V}<1}\bigg(F(s+ t_{n},X_{n}(s-),y)-\widetilde{F}(s,\widetilde{X}(s-),y))\bigg)
    \tilde{N}(ds,dy)d\sigma\bigg\|^{2}\\
    &\leq 2 \mathbb{E}\bigg\|\int_{-\infty}^{t}T(t-\sigma)\int_{-\infty}^{\sigma}B_{2}(\sigma-s)\int_{|y|_{V}<1}\bigg(F(s+ t_{n},X_{n}(s-),y)-F(s+ t_{n},\widetilde{X}(s-),y))\bigg)
    \tilde{N}(ds,dy)d\sigma\bigg\|^{2}\\
    &+ 2 \mathbb{E}\bigg\|\int_{-\infty}^{t}T(t-\sigma)\int_{-\infty}^{\sigma}B_{2}(\sigma-s) \int_{|y|_{V}<1}\bigg(F(s+ t_{n},\widetilde{X}(s-),y)-\widetilde{F}(s,\widetilde{X}(s-),y))\bigg)
    \tilde{N}(ds,dy)d\sigma\bigg\|^{2}\\
\end{align*}
\begin{align*}
    &\leq \frac{2K^{2}}{\omega}\int_{-\infty}^{t}e^{-\omega(t-\sigma)}\int_{-\infty}^{\sigma}\|B_{2}(\sigma-s)\|^{2}m_{F}(s+t_{n})\mathbb{E}\bigg\|X_{n}(s)-
    \widetilde{X}(s)\bigg\|^{2}dsd\sigma\\
    &+\frac{2K^{2}}{\omega} \int_{-\infty}^{t}e^{-\omega(t-\sigma)}\int_{-\infty}^{\sigma}\|B_{2}(\sigma-s)\|^{2} \int_{|y|_{V}<1} \mathbb{E}\bigg\|F(s+ t_{n},\widetilde{X}(s-),y)-\widetilde{F}(s,\widetilde{X}(s-),y)\bigg\|^{2}\nu(dy)dsd\sigma\\
    &\leq  \frac{2K^{2}}{\omega}\int_{-\infty}^{t}e^{-\omega(t-\sigma)}\int_{-\infty}^{\sigma}\|B_{2}(\sigma-s)\|^{2}m_{F}(s+t_{n})\mathbb{E}\bigg\|X_{n}(s)-
    \widetilde{X}(s)\bigg\|^{2}dsd\sigma + \zeta_{4}^{n},
\end{align*}
where $\zeta_{4}^{n}:=\frac{2K^{2}}{\omega^{2}}\|B_{2}\|^{2}_{L^{2}(0,\infty)}\sup_{s\in\mathbb{R}} \bigg(\int_{|y|_{V}<1} \mathbb{E}\bigg\|F(s+ t_{n},\widetilde{X}(s-),y)-\widetilde{F}(s,\widetilde{X}(s-),y)\bigg\|^{2}\nu(dy)\bigg)$. Using (\ref{4}), it follows that $\zeta_{4}^{n} \rightarrow 0$ as $n\rightarrow \infty$, like $\zeta_{1}^{n}$.

For $J_5$, using Cauchy-Schwartz's inequality and the properties of the integral for the Poisson random measure, we have
\begin{align*}
    J_5 &=\mathbb{E}\bigg\|\int_{-\infty}^{t}T(t-\sigma)\int_{-\infty}^{\sigma}B_{2}(\sigma-s)\int_{|y|_{V}\geq 1}\bigg(G(s+ t_{n},X_{n}(s-),y)-\widetilde{G}(s,\widetilde{X}(s-),y)\bigg)N(ds,dy)d\sigma\bigg\|^{2}\\
    &\leq 2\mathbb{E}\bigg\|\int_{-\infty}^{t}T(t-\sigma)\int_{-\infty}^{\sigma}B_{2}(\sigma-s)\int_{|y|_{V}\geq 1}\bigg(G(s+ t_{n},X_{n}(s-),y)-G(s+ t_{n},\widetilde{X}(s-),y)\bigg)N(ds,dy)d\sigma\bigg\|^{2}\\
    &+ 2\mathbb{E}\bigg\|\int_{-\infty}^{t}T(t-\sigma)\int_{-\infty}^{\sigma}B_{2}(\sigma-s)\int_{|y|_{V}\geq 1}\bigg(G(s+ t_{n},\widetilde{X}(s-),y)-\widetilde{G}(s,\widetilde{X}(s-),y)\bigg)N(ds,dy)d\sigma\bigg\|^{2}\\
    &\leq 4\mathbb{E}\bigg\|\int_{-\infty}^{t}T(t-\sigma)\int_{-\infty}^{\sigma}B_{2}(\sigma-s)\int_{|y|_{V}\geq 1}\bigg(G(s+ t_{n},X_{n}(s-),y)-G(s+ t_{n},\widetilde{X}(s-),y)\bigg)\tilde{N}(ds,dy)d\sigma\bigg\|^{2}\\
    &+ 4\mathbb{E}\bigg\|\int_{-\infty}^{t}T(t-\sigma)\int_{-\infty}^{\sigma}B_{2}(\sigma-s)\int_{|y|_{V}\geq 1}\bigg(G(s+ t_{n},X_{n}(s-),y)-G(s+ t_{n},\widetilde{X}(s-),y)\bigg)\nu(dy)dsd\sigma\bigg\|^{2}\\
    &+ 4\mathbb{E}\bigg\|\int_{-\infty}^{t}T(t-\sigma)\int_{-\infty}^{\sigma}B_{2}(\sigma-s)\int_{|y|_{V}\geq 1}\bigg(G(s+ t_{n},\widetilde{X}(s-),y)-\widetilde{G}(s,\widetilde{X}(s-),y)\bigg)\tilde{N}(ds,dy)d\sigma\bigg\|^{2}\\
    &+ 4\mathbb{E}\bigg\|\int_{-\infty}^{t}T(t-\sigma)\int_{-\infty}^{\sigma}B_{2}(\sigma-s)\int_{|y|_{V}\geq 1}\bigg(G(s+ t_{n},\widetilde{X}(s-),y)-\widetilde{G}(s,\widetilde{X}(s-),y)\nu(dy)dsd\sigma\bigg\|^{2}\\
    &\leq \frac{4K^{2}}{\omega}\int_{-\infty}^{t}e^{-\omega(t-\sigma)}\int_{-\infty}^{\sigma}\|B_{2}(\sigma-s)\|^{2}m_{G}(s+t_{n})\mathbb{E}\bigg\|X_{n}(s)-
    \widetilde{X}(s)\bigg\|^{2}dsd\sigma\\
    &+\frac{4K^{2}}{\omega}\int_{-\infty}^{t}e^{-\omega(t-\sigma)}\bigg(\int_{-\infty}^{\sigma}\|B_{2}(\sigma-s)\|\int_{|y|_{V}\geq1}\nu(dy)ds\\
    &~~~~\times \int_{-\infty}^{\sigma}\|B_{2}(\sigma-s)\| \int_{|y|_{V}\geq1} \mathbb{E}\bigg\|G(s+ t_{n},X_{n}(s-),y)-G(s+ t_{n},\widetilde{X}(s-),y)\bigg\|^{2}\nu(dy)ds\bigg)d\sigma\\
    &+ \frac{2K^{2}}{\omega} \int_{-\infty}^{t}e^{-\omega(t-\sigma)}\int_{-\infty}^{\sigma}\|B_{2}(\sigma-s)\|^{2} \int_{|y|_{V}\geq1} \mathbb{E}\bigg\|G(s+ t_{n},\widetilde{X}(s-),y)-\widetilde{G}(s,\widetilde{X}(s-),y)\bigg\|^{2}\nu(dy)dsd\sigma\\
    &+\frac{4K^{2}}{\omega}\int_{-\infty}^{t}e^{-\omega(t-\sigma)}\bigg(\int_{-\infty}^{\sigma}\|B_{2}(\sigma-s)\|\int_{|y|_{V}\geq1}\nu(dy)ds\\
    &~~~~\times \int_{-\infty}^{\sigma}\|B_{2}(\sigma-s)\| \int_{|y|_{V}\geq1} \mathbb{E}\bigg\|G(s+ t_{n},\widetilde{X}(s-),y)-\widetilde{G}(s ,\widetilde{X}(s-),y)\bigg\|^{2}\nu(dy)ds\bigg)d\sigma\\
            \end{align*}
    \begin{align*}
    &\leq \frac{4K^{2}}{\omega}\int_{-\infty}^{t}e^{-\omega(t-\sigma)}\int_{-\infty}^{\sigma}\|B_{2}(\sigma-s)\|^{2}m_{G}(s+t_{n})\mathbb{E}\bigg\|X_{n}(s)-
    \widetilde{X}(s)\bigg\|^{2}dsd\sigma\\
    &+\frac{4bK^{2}}{\omega}\|B_{2}\|_{L^{1}(0,\infty)}\int_{-\infty}^{t}e^{-\omega(t-\sigma)} \int_{-\infty}^{\sigma}\|B_{2}(\sigma-s)\| m_{G}(s+t_{n})\mathbb{E}\bigg\|X_{n}(s)-
    \widetilde{X}(s)\bigg\|^{2}dsd\sigma\\
    &+ \frac{2K^{2}}{\omega} \int_{-\infty}^{t}e^{-\omega(t-\sigma)}\int_{-\infty}^{\sigma}\|B_{2}(\sigma-s)\|^{2} \int_{|y|_{V}\geq1} \mathbb{E}\bigg\|G(s+ t_{n},\widetilde{X}(s-),y)-\widetilde{G}(s,\widetilde{X}(s-),y)\bigg\|^{2}\nu(dy)dsd\sigma\\
    &+\frac{4bK^{2}}{\omega}\|B_{2}\|_{L^{2}(0,\infty)}\int_{-\infty}^{t}e^{-\omega(t-\sigma)}\int_{-\infty}^{\sigma}\|B_{2}(\sigma-s)\| \int_{|y|_{V}\geq1} \mathbb{E}\bigg\|G(s+ t_{n},\widetilde{X}(s-),y)-\widetilde{G}(s,\widetilde{X}(s-),y)\bigg\|^{2}\nu(dy)dsd\sigma\\
    &\leq \frac{4K^{2}}{\omega}\int_{-\infty}^{t}e^{-\omega(t-\sigma)}\int_{-\infty}^{\sigma}\|B_{2}(\sigma-s)\|^{2}m_{G}(s+t_{n})\mathbb{E}\bigg\|X_{n}(s)-
    \widetilde{X}(s)\bigg\|^{2}dsd\sigma\\
    &+\frac{4bK^{2}}{\omega}\|B_{2}\|_{L^{1}(0,\infty)}\int_{-\infty}^{t}e^{-\omega(t-\sigma)} \int_{-\infty}^{\sigma}\|B_{2}(\sigma-s)\| m_{G}(s+t_{n})\mathbb{E}\bigg\|X_{n}(s)-
    \widetilde{X}(s)\bigg\|^{2}dsd\sigma + \zeta_{5}^{n}(t)
\end{align*}
where
\begin{align*}
    \zeta_{5}^{n}&= \frac{2K^{2}}{\omega^{2}}\|B_{2}\|^{2}_{L^{2}(0,\infty)} \sup_{s\in\mathbb{R}} \bigg(\int_{|y|_{V}\geq1} \mathbb{E}\bigg\|G(s+ t_{n},\widetilde{X}(s-),y)-\widetilde{G}(s,\widetilde{X}(s-),y)\bigg\|^{2}\nu(dy)\bigg)\\
    &+\frac{4bK^{2}}{\omega^{2}}\|B_{2}\|^{2}_{L^{1}(0,\infty)} \sup_{s\in\mathbb{R}} \bigg(\int_{|y|_{V}\geq1} \mathbb{E}\bigg\|G(s+  t_{n},\widetilde{X}(s-),y)-\widetilde{G}(s,\widetilde{X}(s-),y)\bigg\|^{2}\nu(dy)\bigg)
\end{align*}
Using  (\ref{5}), it follows that $\zeta_{5}^{n} \rightarrow 0$ as $n\rightarrow \infty$, like $\zeta_{1}^{n}$.
By combining the estimations $J_{1} - J_{5}$, we get for all $t\in \mathbb{R}$
\begin{equation*}
    \mathbb{E}\bigg\|X_{n}(t)-\widetilde{X}(t)\bigg\|^{2}\leq \zeta^{n} + \vartheta \sup_{s\in \mathbb{R}}\mathbb{E}\bigg\|X_{n}(s)-\widetilde{X}(s)\bigg\|^{2}
\end{equation*}
where $\zeta^{n}=\Sigma_{i=1}^{5}\zeta_{i}^{n}$. It follows that
\begin{align*}
   \mathbb{E}\bigg\|X_{n}(t)-\widetilde{X}(t)\bigg\|^{2}\leq \frac{\zeta^{n}}{1-\vartheta}.
\end{align*}
Since $\vartheta<1$ and $\zeta^{n}\rightarrow 0$ as $n\rightarrow \infty$ for all $t\in\mathbb{R}$ then one has
\begin{align*}
\mathbb{E}\bigg\|X_{n}(t)-\widetilde{X}(t)\bigg\|^{2}\rightarrow 0 \quad \mbox{as}\quad n\rightarrow0\quad \mbox{for each}\quad t\in\mathbb{R}.
\end{align*}
Hence we deduce that $X(t + t_n)$ converge to $\widetilde{X}(t)$ in distribution. Similarly, one can get that $\widetilde{X}(t - t_n)$ converge to $X(t)$ in distribution too. Therefore this fixed-point solution of the equation (\ref{eq1.1}) is square-mean almost automorphic in distribution. which completes the proof.

\end{proof}

\section{Example}
Consider the following stochastic integro-differential equations
\begin{equation}\label{Exp}
 \left\{\begin{array}{ll}
\frac{\partial u(t,x)}{\partial t}= \frac{\partial^{2}}{\partial x^{2}}u(t,x) + g(t,u(t,x)) + \int_{-\infty}^{t}e^{-\omega(t-s)}f(s,u(s,x))ds + \int_{-\infty}^{t}e^{-\omega(t-s)}h(s,u(s,x))dW(s)\\
\quad\quad\quad\quad\quad+ \int_{-\infty}^{t}e^{-\omega(t-s)}\theta(s,u(s,x))Z(ds),
   \quad(t,x) \in \mathbb{R}\times(0,1),\\
   u(t,0)=u(t,1)=0, ~~~~\quad t \in \mathbb{R}\\
    u(0,x)=u_0(x)=0, ~~~~\quad x \in (0,1)
\end{array}
\right.
\end{equation}
where $W$ is a $Q$-Wiener process on $L^{2}(0,1)$ with Tr $Q<\infty$ and $Z$ is a L\'{e}vy pure jump process on $L^{2}(0,1)$ which is independent of $W$ and $\omega>0$. The forcing terms are follows:
\begin{align*}
    g(t,u)=\delta\sin u(\sin t + \sin \sqrt{2}t),\quad f(t,u)=\delta\sin u(\sin t + \sin \sqrt{3}t),
\end{align*}

\begin{align*}
    h(t,u)=\delta\sin u(\sin t + \sin \sqrt{5}t), \quad  \theta(t,u)=\delta\sin u(\sin t + \sin \pi t)
\end{align*}

with $\delta>0$. Denote $H=V=L^{2}(0,1)$. In order to write the system \eqref{Exp} on the abstract form \eqref{eq1.1}, we consider the linear operator $A: D(A)\subset L^{2}(0,1)\rightarrow L^{2}(0,1)$, given by
\begin{align*}
    D(A)&=H^{2}(0,1)\cap H^{1}_{0}(0,1),\\
Ax(\xi)&=x"(\xi) \quad\mbox{for}\quad \xi \in (0,1)\quad\mbox{and}\quad x \in D(A).
\end{align*}
It is well-known that A generates a $C_0$-semigroup $(T(t))_{t\geq0}$ on $L^{2}(0,1)$ defined by
\begin{align*}
    (T(t)x)(r)=\sum_{n=1}^{\infty}e^{-n^{2}\pi^{2}t} \langle x ,e_{n}\rangle _{L^{2}}e_{n}(r),
\end{align*}
where $e_{n}(r)=\sqrt{2}\sin(n \pi r)$ for $n=1,2,....,$ and $\|T(t)\|\leq e^{-\pi^{2}t}$ for all $t\geq0$.\\
Then the system \eqref{Exp} takes the following abstract form
\begin{eqnarray*}
     u'(t) &=& Au(t) + g(t,u(t)) + \int_{-\infty}^{t}B_{1}(t-s)f(s,u(s))ds\\ \nonumber
     &+&  \int_{-\infty}^{t}B_{2}(t-s)h(s,u(s))dW(s)\\\nonumber
     &+&\int_{-\infty}^{t}B_{2}(t-s)\int_{|y|_{V}<1}F(s,u((s-),y)\tilde{N}(ds,dy)\\
    &+& \int_{-\infty}^{t}B_{2}(t-s)\int_{|y|_{V}\geq 1}G(s,u(s-,y)N(ds,dy) \quad  \mbox{for all}\quad t \in \mathbb{R},\nonumber
\end{eqnarray*}
where $u(t)=u(t,\cdot)$, $B_1(t)=B_2(t)=e^{\omega t}$ for $t\geq 0$ and
\begin{equation*}
    \theta(t,u)Z(dt)=\int_{|y|_{V}<1}F(t,u(t-),y)\tilde{N}(dt,dy) + \int_{|y|_{V}\geq 1}G(t,u(t-),y)N(dt,dy)
\end{equation*}
with
\begin{equation*}
    Z(t)=\int_{|y|_{V}<1}y\tilde{N}(t,dy) + \int_{|y|_{V}\geq 1}yN(t,dy),
\end{equation*}

\begin{equation*}
 F(t,u,y)=h(t,u)y\cdot1_{\{|y|_{V}< 1\}}\quad  \mbox{and} \quad G(t,u,y)=h(t,u)y\cdot1_{\{|y|_{V}\geq 1\}}.
\end{equation*}
Here, we assume that the L\'{e}vy pure jump process $Z$ on $L^{2}(0,1)$ is decomposed as above by the L\'{e}vy-It decomposition theorem.\\
Clearly, $f,\theta$ are almost automorphic, and $F, G$ Poisson almost automorphic and satisfying $H_{1}$--$H_{3}$. Moreover, it is easy to see that the conditions \eqref{Lip1}--\eqref{Lip5} are satisfied with
\begin{align*}
    m_{g}(t)=\delta^{2}\bigg(\sin t + \sin \sqrt{2}t\bigg)^{2}, \quad  m_{f}(t)=\delta^{2}\bigg(\sin t + \sin \sqrt{3}t\bigg)^{2}
\end{align*}
\begin{align*}
    m_{h}(t)=\delta^{2}\|Q\|_{L(V,V)}\bigg(\sin t + \sin \sqrt{5}t\bigg)^{2}, \quad  m_{F}(t)=\delta^{2}\nu(B_1(0))\bigg(\sin t + \sin \sqrt{3}t\bigg)^{2}
\end{align*}
\begin{align*}
  m_{G}(t)=\delta^{2}b\bigg(\sin t + \sin \pi t\bigg)^{2},
\end{align*}
where $B_1(0)$ is the ball in $V$ centered at the origin with radius 1.

Obviously,
\begin{equation*}
    L_g=\sup_{t\in \mathbb{R}} \int_{-\infty}^{t}e^{-\omega(t-s)}m_g(s)ds=\delta^{2}\sup_{t\in \mathbb{R}} \int_{-\infty}^{t}e^{-\omega(t-s)}m_g(s)ds\bigg(\sin s + \sin \sqrt{2}s\bigg)^{2}<\infty,
     \end{equation*}
\begin{equation*}
    L_f=\sup_{t\in \mathbb{R}} \int_{-\infty}^{t}\|B_{1}(t-s)\|m_f(s)ds=\delta^{2}\sup_{t\in \mathbb{R}} \int_{-\infty}^{t}\|B_{1}(t-s)\|\bigg(\sin s + \sin \sqrt{3}s\bigg)^{2}ds<\infty,
     \end{equation*}
     \begin{equation*}
     \quad  L_h=\sup_{t\in \mathbb{R}} \int_{-\infty}^{t}\|B_{2}(t-s)\|m_h(s)ds=\delta^{2}\|Q\|_{L(V,V)}\sup_{t\in \mathbb{R}} \int_{-\infty}^{t}\|B_{2}(t-s)\bigg(\sin s + \sin \sqrt{5}s\bigg)^{2}ds<\infty,
\end{equation*}
\begin{equation*}
 L_F=\sup_{t\in \mathbb{R}} \int_{-\infty}^{t}\|B_{2}(t-s)\|^{2}m_F(s)ds=\delta^{2}\nu(B_1(0))\sup_{t\in \mathbb{R}} \int_{-\infty}^{t}\|B_{2}(t-s)\|^{2}\bigg(\sin s + \sin \pi s\bigg)^{2}ds<\infty,
\end{equation*}
\begin{equation*}
    L_G=\sup_{t\in \mathbb{R}} \int_{-\infty}^{t}\|B_{2}(t-s)\|^{2}m_G(s)ds=\delta^{2}b\sup_{t\in \mathbb{R}} \int_{-\infty}^{t}\|B_{2}(t-s)\|^{2}\bigg(\sin s + \sin \pi s\bigg)^{2}ds<\infty
\end{equation*}

Therefore, by Theorem \ref{Th1}, the equation \eqref{Exp} has a square-mean almost automorphic in distribution mild solution on $\mathbb{R}$ whenever $\delta$ is small enough.


\begin{thebibliography}{99}
\bibitem{levy1}
D. Applebaum, L\'{e}vy Process and Stochastic Calculus, second edition, Cambridge University Press, (2009).



\bibitem{BP1}
P. Bezandry and T. Diagana,
Existence of almost periodic solutions to some stochastic differential equations,
Appl. Anal, 86, (2007), 819--827.

\bibitem{BP2}
P. Bezandry,
Existence of almost periodic solutions to some functional integro-differential stochastic evolution equations,
Statist. Probab. Lett. 78, (2008), 2844--2849.


\bibitem{BP5}
P. H. Bezandry and T. Diagana,
Square-mean almost periodic solutions nonautonomous stochastic differential equations,
Electron. J. Differential Equations, 2007, No. 117, 10 pp.



\bibitem{BP4}
P. Bezandry and T. Diagana, Existence of S2-almost periodic solutions to a class of nonautonomous stochastic evolution equations,
Electron. J. Qual. Theory Differ. Equ. 35, (2008), 1--19.


\bibitem{B}
S. Bochner,
Uniform convergence of monotone sequences of functions,
Proc. Natl. Acad. Sci. USA, 47, (1961), 582--585.


\bibitem{CYK1}
Y. K. Changa, Z. H. Zhaoa, G.M. N?Gu?r?kata and R. Mab,
Stepanov-like almost automorphy for stochastic processes and applications to stochastic differential equations,
Nonlinear Analysis: Real World Applications 12, (2011), 1130--1139.



\bibitem{Tog1}
T. Diagana,
Weighted pseudo almost periodic functions and applications,
Comptes Rendus de l'Académie, des Sciences de Paris, S?rie I 343, (10), (2006), 643--646.


\bibitem{Tog2}
T. Diagana,
Weighted pseudo-almost periodic solutions to some differential equations,
Nonlinear Analysis, Theory, Methods and Applications, 68, (8), (2008), 2250--2260.


\bibitem{Pub0}
M. A. Diop, K. Ezzinbi and M. M. Mbaye,
Measure theory and $S^{2}-$ pseudo almost periodic and automorphic process: Application to stochastic evolution equations,
Afrika Matematika, 26, (5), (2015), 779--812.


\bibitem{Pub3}
M. A. Diop, K. Ezzinbi and M. M. Mbaye,
Existence and global attractiveness of a pseudo almost periodic solution in the \emph{p-}th mean sense for stochastic evolution equation driven by a fractional Brownian, Stochastics: An International Journal Of Probability And Stochastic Processes, 87, (6), (2015), 1061--1093.


\bibitem{Pub4}
M. A. Diop, K. Ezzinbi and M. M. Mbaye, Existence and global attractiveness of a square-mean $\mu$-pseudo almost automorphic solution for some stochastic evolution equation driven by L\'{e}vy noise, Math. Nachr., 290, (8-9), 1260--1280.




\bibitem{FU1}
M. M. Fu,
Almost automorphic solutions for nonautonomous stochastic differential equations,
Journal of Mathematical Analysis and Applications. 393, (2012), 231--238.


\bibitem{FU}
M. M. Fu and Z.X. Liu,
Square-mean almost automorphic solutions for some stochastic differential equations,
Proc. Amer. Math. Soc. 133, (2010), 3689--3701.


\bibitem{Mel1}
 M. Kamenskii, O. Mellah, P. Raynaud de Fitte, Weak averaging of semilinear stochastic differential equations with
almost periodic coefficients, J. Math. Anal. Appl. 427 (2015) 336--364


\bibitem{Mel2}
 O. Mellah, P. Raynaud de Fitte, Counterexamples to mean square almost periodicity of the solutions of some SDEs
with almost periodic coefficients, Electron. J. Differential Equations 2013 (91) (2013) 1--7.


\bibitem{Lvy}
Z. Liu and K. Sun,
Almost automorphic solutions for stochastic differential equations driven by L\'{e}vy noise,
J. Funct. Anal., 266, (3), (2014), 1115--1149.

\bibitem{Mbaye}
M. M. Mbaye,
Square-mean $\mu-$pseudo almost periodic and automophic solutions for a class of semilinear integro-differential stochastic evolution equations, Afrika Matematika, 28, (3-4), (2017), 643--660.

\bibitem{Mbaye2}
M. M. Mbaye,
Almost automorphic solution for some stochastic evolution equation driven by L\'{e}vy noise with coefficients $S^{ 2}-$almost
automorphic, Nonauton. Dyn. Syst. 2016, 3:85--103

\bibitem{G'N1}
G.M. N'Gu\'{e}r\'{e}kata,
Almost automorphic solutions to second-order semilinear evolution equations,
Nonlinear Analysis, Theory, Methods and Applications 71, (12), (2009), e432-e435.


\bibitem{N'G}
G.M. N'Gu\'{e}r\'{e}kata,
Topics in almost Automorphy,
Springer-verlag, New York, 2005.

\bibitem{levy2}
S. Peszat, J. Zabczyk, Stochastic Partial Differential Equations with L\'{e}vy Noise, Cambridge University Press, (2007).


\bibitem{Prato}
G.D. Prato and J. Zabczyk, Stochastics Equations in Infinite Dimensions, Encyclopedia of Mathematics and its Applications 44, Cambridge University Press, Cambridge, (1992).



\bibitem{TC}
C. Tudor,
Almost periodic solutions of affine stochastic evolutions equations,
Stoch. Stoch. Rep, 38, (1992), 251--266.


\bibitem{Xia}
Zhinan Xia,
Pseudo almost automorphic in distribution solutions of semilinear stochastic integro-differential equations by measure theory,
 Int. J. Math., doi.org/10.1142/S0129167X15501128



\bibitem{Wang}
Y. Wang, Z.X. Liu, Almost periodic solutions for stochastic differential equations with L\'{e}vy noise, Nonlinearity, 25, (2012),
2803--2821.



\end{thebibliography}
\end{document}